\newtheorem{definition}{Definition} 
\newtheorem{prop}[definition]{Proposition} 
\newtheorem{thm}[definition]{Theorem} 
\newtheorem{lem}[definition]{Lemma} 
\newtheorem{cor}[definition]{Corollary}
\def\AAA{\mathcal{A}}
\def\BBB{\mathcal{B}}
\def\C{\mathcal{C}}
\def\D{\Delta}
\def\D'{\Delta}
\def\H{\mathcal{H}}
\def\HHH{\mathcal{H}}
\def\NN{\mathbb{N}}
\def\P{\mathcal{P}}
\def\R{\mathbb{R}}
\def\RR{\mathbb{R}}
\def\Z{\mathbb{Z}}
\def\ZZ{\mathbb{Z}}
\def\x{\mathbf{x}}
\def\rar{\rightarrow}
\def\pull{\mathrm{pull}}
\def\conv{\mathrm{conv}\:}
\def\head{\mathrm{head}}
\def\tail{\mathrm{tail}}
\newcommand{\Tension}{\theta}
\newcommand{\Flow}{\varphi}
\newcommand{\mFlow}{\bar{\Flow}}
\newcommand{\mTension}{\bar{\Tension}}
\newcommand{\sprod}[2]{\langle {#1} , {#2} \rangle}
\newcommand{\defn}[1]{\textbf{#1}}
\newcommand{\floor}[1]{\lfloor {#1} \rfloor}
\newcommand{\ceil}[1]{\lceil {#1} \rceil}
\newcommand{\inter}{\mathrm{int }}
\begin{document}

\title{Bounds on the Coefficients of Tension and Flow Polynomials}

\author{Felix Breuer}
\address{Felix Breuer, Freie Universit\"at Berlin, Arnimallee 3, 14195 Berlin, Germany. }
\thanks{Felix Breuer was supported by Emmy Noether grant HA 4383/1 of the German Research Foundation (DFG).}
\email{felix.breuer@fu-berlin.de}

\author{Aaron Dall}
\address{Aaron Dall, Wartburgstr. 10, 10823 Berlin, Germany. }
\email{adall1979@gmail.com}

\begin{abstract}
The goal of this article is to obtain bounds on the coefficients of modular and integral flow and tension polynomials of graphs. To this end we make use of the fact that these polynomials can be realized as Ehrhart polynomials of inside-out polytopes. Inside-out polytopes come with an associated relative polytopal complex and, for a wide class of inside-out polytopes, we show that this complex has a convex ear decomposition. This leads to the desired bounds on the coefficients of these polynomials.
\end{abstract}

\maketitle

\section{Introduction}

The goal of this article is to obtain bounds on the coefficients of modular and integral flow and tension polynomials of graphs. To this end we employ Ehrhart theory, the theory of lattice points in polyhedra. In past research \cite{BeckZaslavsky06a, BeckZaslavsky06b, Dall08, BreuerSanyal09, Breuer09} it has been shown that each of these polynomials can be realized as the Ehrhart polynomial of inside-out polytopes. Inside-out polytopes come with an associated relative polytopal complex $\C'\subset\C$. In this article we show that for a wide class of inside-out polytopes, this polytopal complex can be triangulated such that resulting relative simplicial complex $\Delta'\subset\Delta$ is unimodular and $\Delta'$ admits a convex ear decomposition (Theorem~\ref{thm:iop-ced}). This implies constraints on the Ehrhart polynomial of the inside-out polytope (Theorem~\ref{thm:iop-bounds}). In the case of modular flow and tension polynomials, which is of greater interest in graph theory than the integral case, this leads to upper and lower bounds on the coefficients of these polynomials (Theorem~\ref{thm:bounds-modular}). In the integral case, Theorem~\ref{thm:iop-bounds} calls for a closer analysis of the Ehrhart polynomials of flow and tension polytopes, which we begin in Section~\ref{sec:integral-case}. In particular we obtain upper and lower bounds on the $h^*$-vector of tension and flow polytopes.

This article is related to similar work done for the chromatic polynomial of a graph. Steingr\'{\i}msson \cite{Steingrimsson01} showed that the chromatic polynomial of graph can be realized as the Hilbert function of a certain square-free monomial ideal, which in turn gave rise to what  Steingr\'{\i}msson called the coloring complex of a graph. Subsequent articles, building on Steingr\'{\i}msson's work, have investigated various properties of the coloring complex.  Jonsson \cite{Jonsson05} showed the coloring complex to be constructible and hence Cohen-Macaulay. This result was improved by Hultman \cite{Hultman07} who showed the coloring complex to be shellable and by Hersh and Swartz \cite{HershSwartz08} who showed that the coloring complex has a convex ear decomposition. These results translate into bounds on the coefficients of the chromatic polynomial. 

Breuer and Dall \cite{BreuerDall09} have shown that the integral and modular flow and tension polynomials of a graph can also be realized as Hilbert functions of square-free monomial ideals, working with the theory of lattice polytopes rather than the theory of Stanley-Reisner rings. In the present article we continue this work to obtain bounds on the coefficients of these polynomials. As we only need to focus on the geometry to obtain our results, we do not use that these are Hilbert functions but only that they are Ehrhart polynomials.

This article is organized as follows. We begin in the realm of discrete geometry where we use Section~\ref{sec:disc-geom} to give some preliminary definitions. We then focus on regular subdivisions of polytopes in Section~\ref{sec:regular-subdivisions} where we show that the subdivision determined by an inside-out polytope is regular (Lemma~\ref{lem:iop-regular}). We introduce the four counting polynomials in Section~\ref{sec:flow-and-tension} and summarize the realization of these as Ehrhart polynomials of inside-out polytopes. In Section~\ref{sec:ced} we define convex ear decompositions and show that regular triangulations of the complexes associated with inside-out polytopes have a convex ear decomposition. As we need to relate the $h$-vector of an abstract simplicial complex to the Ehrhart $h^*$-vector of a relative polytopal complex, we give an overview of the well-known relationships between $f$-, $h$- and $h^*$-vectors in Section~\ref{sec:fhh-vectors}. We are then in the position to derive our main results in Section~\ref{sec:enumerative-consequences}. To deal with the integral case, more work has to be done, which we begin in Section~\ref{sec:integral-case} by obtaining bounds on the $h^*$-vectors of flow and tension polytopes.

\section{Preliminaries from Discrete Geometry}
\label{sec:disc-geom}

Before we begin, we gather some definitions from discrete geometry. We recommend the textbooks \cite{BeckRobins07,Ziegler95} as references.

The \defn{Ehrhart function} $L_A$ of any set $A\subset \RR^n$ is defined by $L_A(k)=|\ZZ^n\cap k\cdot A|$ for $k\in\NN$. A \defn{lattice polytope} is a polytope in $\RR^n$ such that all vertices are integer points. It is a theorem of Ehrhart that the Ehrhart function $L_P(k)$ of a lattice polytope is a polynomial in $k$. Two polytopes $P,Q$ are \defn{lattice isomorphic}, $P\approx Q$, if there exists an affine isomorphism $A$ such that $A|_{\ZZ^n}$ is a bijection onto $\ZZ^n$ and $AP=Q$. A \defn{$d$-simplex} is the convex hull of  $d+1$ affinely independent points. A $d$-simplex is \defn{unimodular} if it is lattice isomorphic to the convex hull of $d+1$ standard unit vectors. A \defn{hyperplane arrangement} is a finite collection $\HHH$ of affine hyperplanes and $\bigcup\HHH$ denotes the union of all of these. A \defn{cell} of $\HHH$ is the closure of a component of the complement of $\bigcup\HHH$. We make use of the notation 
$$ H_{a,b} = \{ x\in\RR^n \;|\; \sprod{a}{x} = b\}$$
for $a\in\RR^n$ and $b\in \RR$ to denote hyperplanes.

A \defn{polytopal complex} is a finite collection $\C$ of polytopes in some $\RR^n$ with the following two properties: If $P\in\C$ and $F$ is a face of $P$, then $F\in\C$; and if $P,Q\in\C$ then $F=P\cap Q\in \C$ and $F$ is common face of both $P$ and $Q$. The polytopes in $\C$ are called \defn{faces} and $\bigcup\C$, the union of all faces of $\C$,  is called the \defn{support} of $\C$. A (geometric) \defn{simplicial complex} is a polytopal complex in which all faces are simplices. An \defn{abstract simplicial complex} is a set $\Delta$ of subsets of a finite set $V$ such that $\Delta$ is closed under taking subsets. A geometric simplicial complex gives rise to an abstract simplicial complex and every abstract simplical complex can be realized by a geometric one. We will not distinguish notationally between these two notions as it will be clear from the context which is meant. A polytopal complex $\C'$ that is a subset $\C'\subset\C$ of a polytopal complex $\C$ is called a \defn{subcomplex} of $\C$. Subcomplexes of abstract simplical complexes are defined similarly. Given a collection $S$ of polytopes in $\RR^n$ such that for any $P,Q\in S$ the set $P\cap Q$ is a face of both $P$ and $Q$, the polytopal complex $\C$ \defn{generated} by $S$, is $\C=\{F | \text{$F$ a face of $P\in S$}\}$. A \defn{subdivision} of a polytopal complex $\C$ is a polytopal complex $\C'$ such that $\bigcup\C=\bigcup\C'$ and every face of $\C'$ is contained in a face of $\C$. A \defn{triangulation} is a subdivision in which all faces are simplicies. A \defn{unimodular triangulation} is a triangulation in which all simplices are unimodular.

A \defn{relative polytopal complex} $\C'\subset\C$ is pair of polytopal complexes $\C'$ and $\C$ such that $\C'$ is a subcomplex of $\C$. An \defn{inside-out polytope} is a pair $(P,\HHH)$ of a polytope $P\subset\RR^n$ and a finite collection of hyperplanes $\HHH$ in $\RR^n$ such that each hyperplane $H\in\HHH$ meets the relative interior of $P$. Every inside-out polytope $(P,\HHH)$ comes with an associated relative polytopal complex $\C'\subset \C$, where $\C$ is generated by the cells of the hyperplane arrangement intersected with $P$ and $\C'$ is the subcomplex of $\C$ consisting of all those faces of $\C$ that are contained in $\bigcup \HHH\cup \partial P$. The Ehrhart function $L_{(P,\HHH)}$ of  an inside-out poltyope is the Ehrhart function of $\bigcup\C\setminus \bigcup\C'$, i.e., 
$$L_{(P,\HHH)}(k)=|\ZZ^n\cap k\cdot(\inter\: P \setminus \bigcup\HHH)|$$ 
for $k\in\NN$. We call an inside-out polytope \defn{integral} if all the faces of $\C$ are lattice polytopes. In this case $L_{(P,\HHH)}(k)$ is a polynomial in $k$.

\section{Regular Subdivisions}
\label{sec:regular-subdivisions}

We are going to use the concept of a regular subdivision of a polytope. For a thorough treatment of this concept we refer to \cite{BrunsGubeladze09, Sturmfels96, Lee2004}. In this section we show that the polytopal complexes induced by inside-out polytopes are regular subdivisions. Moreover we present two tools from the literature that we are going to use.  This will enable us to show in Section~\ref{sec:ced} that the complexes that interest us have convex ear decompositions.

Loosely speaking, a polytopal complex $\P$ is a regular subdivision of a $d$-polytope $P$ if it is a subdivision of $P$ and there exists a $(d+1)$-polytope $Q$ such that $\P$ is the projection of the lower hull of $Q$. More precisely, we let $P$ be a $d$-polytope and assume without loss of generality that $P\subset\R^d\times\{0\}\subset\R^{d+1}$ and define $\pi$ to be the map $(v_1,\ldots,v_d,v_{d+1})\mapsto (v_1,\ldots,v_d,0)$ that projects away the last coordinate. A polytopal complex $\P$ with $\bigcup \P=P$ is a \defn{regular subdivision} of $P\subset\R^n$ if there exists a $(d+1)$-polytope $Q\subset\R^{d+1}$ such that the facets of $P$ are precisely the images under $\pi$ of those facets of $Q$ whose outer normal $u=(u_1,\ldots,u_{d+1})$ has $u_{d+1}<0$. A \defn{regular triangulation} is a regular subdivision that is a triangulation.

We note that having a regular subdivision is hereditary in the sense that if $\C$ is a subcomplex of $\P$ such that $\bigcup\C$ is a polytope and $\P$ is a regular subdivision of $\bigcup\P$, then $\C$ is a regular subdivision of $\bigcup\C$.

If we can show that a given simplicial complex is a regular triangulation, this tells us that we can realize this complex as a subcomplex of the boundary of a simplicial polytope.

\begin{lem}
\label{lem:realizing-regular-triangulations}
Let $P$ be a $d$-polytope and $\P$ a regular triangulation of $P$. Then:
\begin{enumerate}
\item \label{itm:bdrycomplex} $\P|_{\partial P}$ is combinatorially equivalent to the boundary complex of a simplicial $d$-polytope.
\item \label{itm:subcomplex} $\P$ is combinatorially equivalent to a subcomplex of the boundary complex of a simplicial $(d+1)$-polytope.
\end{enumerate}
\end{lem}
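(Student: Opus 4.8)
The plan is to prove both parts from the defining property of a regular triangulation: $\P$ is the projection $\pi$ of the lower hull of some $(d+1)$-polytope $Q$. I would first set up $Q$ carefully. Since $\P$ is a triangulation, after a generic perturbation of the lifting heights (or by the standard correspondence between regular triangulations and height functions) I may assume that the lower facets of $Q$ are exactly simplices projecting to the maximal simplices of $\P$, and that $Q$ is itself a simplicial polytope on the lower side. Concretely, I would let $Q = \conv\{(v, \omega(v)) : v \text{ a vertex of } \P\}$ for a height function $\omega$ inducing $\P$, chosen generic enough that all lower faces are simplices.

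For part~\eqref{itm:subcomplex}, the key observation is that the lower hull of $Q$ — the subcomplex of $\partial Q$ consisting of facets with outer normal having negative last coordinate, together with all their faces — is combinatorially isomorphic to $\P$ via $\pi$, because $\pi$ restricted to the lower hull is a bijection onto $P$ that respects the face structure. Thus $\P$ is combinatorially equivalent to a subcomplex (the lower hull) of $\partial Q$. The remaining step is to arrange that $Q$ is a \emph{simplicial} $(d+1)$-polytope, not merely simplicial on its lower side. The upper hull may have non-simplicial facets, but I can triangulate them, or more cleanly replace $Q$ by the convex hull of its vertices after pushing the top vertex/vertices into general position; a standard pulling triangulation of the upper facets, which does not touch the lower hull, yields a simplicial polytope $Q'$ with the same lower hull. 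Then $\P \cong$ (lower hull of $Q'$) is a subcomplex of $\partial Q'$, proving~\eqref{itm:subcomplex}.

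For part~\eqref{itm:bdrycomplex}, I would exploit that $\P|_{\partial P}$, the restriction of the triangulation to the boundary of the $d$-polytope $P$, is itself a regular triangulation of each facet and hence a triangulation of the $(d-1)$-sphere $\partial P$. The clean route is this: having realized $\P$ as the lower hull of a simplicial $(d+1)$-polytope $Q'$, the triangulation $\P|_{\partial P}$ corresponds to the faces of the lower hull lying over $\partial P$, which together with the single upper facet (or its triangulation) closes up into the boundary of a simplicial $d$-polytope. Alternatively, and perhaps more directly, since $\P|_{\partial P}$ is a regular triangulation of the $(d-1)$-dimensional boundary spheres of $P$, I can lift it by one more dimension: form the convex hull $R$ of the lifted vertices together with a cone point far below, so that $\partial R$ is a simplicial $d$-polytope whose boundary complex is combinatorially $\P|_{\partial P}$.

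The main obstacle I expect is the bookkeeping in ensuring simpliciality of the \emph{entire} boundary of the realizing polytope rather than just its lower (or inner) hull, and verifying that the combinatorial equivalence is genuine — that distinct simplices of $\P$ project from distinct lower faces and that adjacency is preserved. This is where genericity of the height function $\omega$ is essential: it guarantees no two lifted vertices are cofacial in an unintended way and that the lower hull is a simplicial complex combinatorially identical to $\P$. Once the lifting is generic, both the bijection $\pi$ and the closing-up argument are routine, and the two statements follow. I would cite the standard theory of regular subdivisions and the lower/upper hull correspondence (e.g.\ \cite{Ziegler95,Sturmfels96}) for the facts used in the lifting.
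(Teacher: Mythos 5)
There is a genuine gap, and it sits exactly where you flagged your ``main obstacle'': making the \emph{entire} ambient polytope simplicial while preserving the lower hull. Your two proposed fixes both fail. First, triangulating the upper facets of $Q$ does not ``yield a simplicial polytope $Q'$'': subdividing the facets of a polytope boundary produces a simplicial $d$-sphere, but not every simplicial sphere is the boundary complex of a polytope, and polytopality of this refined sphere is precisely what has to be proved --- you cannot assume it. Second, the perturbation route (``pushing the top vertices into general position'') is blocked by the vertical facets of $Q$: if $\P$ subdivides a facet $F$ of $P$ with vertices in the relative interior of $F$, then $Q$ has a non-simplicial facet with vertical supporting hyperplane over $F$, \emph{all} of whose vertices are vertices of lower facets. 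There is no ``top'' vertex available to push, and perturbing equatorial vertices can tilt vertical facets into lower position, changing the projected complex --- so the claim that the lower hull survives the perturbation is false in general. The paper avoids both problems with a bootstrap you are missing: it takes a regular triangulation $\P'$ of the lifted $(d+1)$-polytope $P'$ whose boundary restriction refines $\partial P'$ without new vertices (so the simplices of the lifted $\P$, being faces of $\partial P'$, survive intact in $\P'|_{\partial P'}$), and then applies part~(\ref{itm:bdrycomplex}) \emph{one dimension up}, to $P'$ and $\P'$, to conclude that $\P'|_{\partial P'}$ is polytopal. That single idea --- deducing (\ref{itm:subcomplex}) from (\ref{itm:bdrycomplex}) applied in dimension $d+1$ --- replaces all of your ad hoc repair work on the upper hull.

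This also exposes the second gap: your argument presupposes part~(\ref{itm:bdrycomplex}), but your sketches for it do not hold up. The ``closes up with the single upper facet'' remark conflates $\P|_{\partial P}$ (a $(d-1)$-sphere, already closed --- the issue is whether \emph{this particular} sphere is polytopal) with the $d$-sphere $\partial Q$; and the ``cone point far below the lifted vertices'' construction is dimensionally off --- the convex hull of the lifted vertices with a far-below point is a $(d+1)$-polytope, not a simplicial $d$-polytope realizing $\P|_{\partial P}$. The paper does not prove (\ref{itm:bdrycomplex}) at all: it cites Bruns and R\"omer \cite[Lemma 9]{BrunsRomer07}, where the regularity of the triangulation is used in an essential, non-routine way to realize the boundary triangulation polytopally. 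So your proposal should either cite that result, as the paper does, or supply an actual argument for it; as written, both parts of your proof rest on unproven polytopality claims about simplicial spheres.
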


\begin{proof}
(\ref{itm:bdrycomplex}) is a result by Bruns and R\"omer \cite[Lemma 9]{BrunsRomer07}. To show (\ref{itm:subcomplex}) we claim that there exists a $(d+1)$-polytope $P'$ and a regular triangulation $\P'$ of $P'$ such that $\P$ is combinatorially equivalent to a subcomplex of $\P'|_{\partial P'}$. Then we can apply (\ref{itm:bdrycomplex}) to complete the proof of (\ref{itm:subcomplex}).

To show the above claim, we argue as follows. By definition of a regular subdivision, $\P$ is the complex defined by the lower hull of some polytope $P'$. Let $\P'$ be any regular triangulation of $P'$ such that $\P'|_{\partial P'}$ is a refinement without new vertices of the boundary complex of $P'$. $\P$ is combinatorially equivalent to a subcomplex of the boundary complex of $P'$ and as $\P$ is simplicial, $\P$ is also combinatorially equivalent to a subcomplex of $\P'|_{\partial P'}$.
\end{proof}

To give a formal proof of the fact that a given complex is indeed a regular subdivision, it is convenient to work with a different definition of regular subdivision, that is easily seen to be equivalent: Let $\C$ be a polytopal complex whose support is a polytope $P$. A function $\omega:P\rar\R$ is a \defn{$\C$-linear strictly $\C$-convex support function} if the following properties hold.
\begin{enumerate}
\item $\omega$ is continuous.
\item $\omega$ is affine on each $F\in\C$.
\item $\omega$ is convex on $P$.
\item \label{itm:dom-of-lin} The convex sets $S\subset P$ that are inclusion maximal with the property that there exists an affine function $f$ such that $f\leq \omega$ and $f|_S=\omega|_S$, are exactly the facets of $\C$.
\end{enumerate}
We call a convex set $S$ with the properties given in (\ref{itm:dom-of-lin})  a \defn{domain of linearity}. A subdivision $\C$ of $P$ possesses an $\C$-linear strictly $\C$-convex support function if and only if $\C$ is a regular subdivision of $P$.

\begin{lem}
\label{lem:iop-regular}
Let $(P,\H)$ be an inside-out polytope and let $\C'\subset\C$ be the associated relative polytopal complex. Then $\C$ is a regular subdivision of $P$.
\end{lem}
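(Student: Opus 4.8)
The plan is to exhibit a $\C$-linear strictly $\C$-convex support function $\omega\colon P\rar\R$, since by the criterion stated just above the lemma, producing such a function is equivalent to $\C$ being a regular subdivision of $P$. Write the hyperplanes of the arrangement as $\H=\{H_{a_1,b_1},\ldots,H_{a_m,b_m}\}$ and define
$$\omega(x)=\sum_{i=1}^m \left| \sprod{a_i}{x} - b_i \right|.$$
The idea is that the crease locus of $\omega$ is precisely $\bigcup\H$, so the regions on which $\omega$ is affine are exactly the cells of the arrangement meeting $P$, which are the facets of $\C$.

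First I would check the three easy conditions. Continuity and convexity on $P$ are immediate, since $\omega$ is a finite sum of functions $x\mapsto |\sprod{a_i}{x}-b_i|$, each of which is the absolute value of an affine function and hence continuous and convex. For affineness on each face $F\in\C$: every face is contained in some cell $C$ of the arrangement, and on $C$ each form $\sprod{a_i}{\cdot}-b_i$ has constant sign $\E_i\in\{-1,+1\}$ (or vanishes on a hyperplane that contains $C$), so $\omega$ agrees on $C$ with the single affine function $f_C(x)=\sum_i \E_i(\sprod{a_i}{x}-b_i)$.

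The main work is condition (4), identifying the domains of linearity with the facets of $\C$. For a full-dimensional cell $C$ with sign vector $(\E_i)$, the affine function $f_C$ above satisfies $f_C\leq\omega$ everywhere on $P$, because $\E_i(\sprod{a_i}{x}-b_i)\leq|\sprod{a_i}{x}-b_i|$ pointwise, with equality in the $i$-th term exactly on the closed halfspace on the $\E_i$-side of $H_{a_i,b_i}$; intersecting these over all $i$ shows that the equality set $\{f_C=\omega\}$ is precisely the closed cell $C$. Hence each facet of $\C$ is a domain of linearity. To see these are inclusion-maximal and that there are no others, I would argue that $\omega$ fails to be affine across any hyperplane: since each $H_i$ meets $\inter P$, the form $\sprod{a_i}{\cdot}-b_i$ takes both signs on $P$, so $\omega$ has a genuine kink along $H_i\cap P$ and cannot coincide with any affine $f\leq\omega$ on a convex set that properly crosses $H_i$. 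Therefore every domain of linearity lies in a single closed cell, and the maximal ones are exactly the full-dimensional cells, i.e. the facets of $\C$.

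The one point requiring care is the hypothesis that each hyperplane meets the relative interior of $P$: this is exactly what guarantees that each $|\sprod{a_i}{\cdot}-b_i|$ contributes a real crease inside $P$ rather than being globally affine, and hence that the cells—and not some coarser regions—are the domains of linearity. I would also note that we may work throughout inside $\mathrm{aff}\,P$ (discarding any hyperplane containing $\mathrm{aff}\,P$, whose contribution to $\omega$ is globally affine and thus harmless), so that no generality is lost when $P$ is not full-dimensional in $\RR^n$.
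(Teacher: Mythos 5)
Your proposal is correct and takes essentially the same approach as the paper: you construct the identical support function $\omega(x)=\sum_{i}\left|\sprod{a_i}{x}-b_i\right|$, invoke the same equivalence between strictly $\C$-convex support functions and regular subdivisions, and your ``kink across each hyperplane'' argument is precisely the paper's strict-convexity step showing that no domain of linearity can properly cross a hyperplane of $\H$ (which is where the hypothesis that each hyperplane meets the relative interior of $P$ enters, just as you note). The only minor variation is that you exhibit the supporting affine function on each cell explicitly from its sign vector, $f_C(x)=\sum_i \E_i\left(\sprod{a_i}{x}-b_i\right)$, and read off its exact equality set, whereas the paper instead uniquely extends $\omega|_F$ from the full-dimensional face $F$ and verifies $f\leq\omega$ on $P$ by a separate convexity argument; both close the maximality step equally well.
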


\begin{proof}
For every affine hyperplane $H\in\H$ we define a function $\omega_H:P\rar \R$ as follows. Let $z$ be any point in $H$, let $v$ be a normal vector of $H$ and define
\[
  \omega_H(x) := |\sprod{v}{z} - \sprod{v}{x}|.
\]
As can be checked easily, $\omega_H$ is continuous and affine on each of the two half spaces defined by $H$. Moreover $\omega_H$ is convex, i.e.,\
\begin{eqnarray}
\label{eqn:convexity}
  \omega_H(\lambda x+ (1-\lambda) y)) & \leq & \lambda\omega_H(x)+(1-\lambda)\omega_H(y),
\end{eqnarray}
where the inequality above is strict if and only if $x,y\not\in H$ lie in opposite half-spaces.

Now, we define a function $\omega:P\rar\R$ by
\[
  \omega(x) = \sum_{H\in\H} \omega_H(x).
\]
We claim that this function is a $\C$-linear strictly $\C$-convex support function.
\begin{enumerate}
\item $\omega$ is continuous because it is a sum of continuous functions.
\item $\omega$ is affine on each $F\in\C$, because each of the $\omega_H$ is affine on $F$.
\item $\omega$ is convex because it is a sum of convex functions.
\end{enumerate}
All that we have left to show is property (\ref{itm:dom-of-lin}). 

Let $S\subset P$ be a convex set that is inclusion maximal with respect to the property that there is an affine function $f:\bigcup\C\rar\R$ with $f\leq \omega$ and $f|_S=\omega|_S$. Assume that there exist $x,y\in S$  that are contained in the relative interior of two different maximal faces of $\C$. That means that there is a hyperplane $H_0\in\H$ that separates the two. Then we have
\[
    f(\frac{1}{2}x+\frac{1}{2}y) = \omega(\frac{1}{2}x+\frac{1}{2}y) < \frac{1}{2} \omega(x) + \frac{1}{2}\omega(y) = \frac{1}{2}f(x) + \frac{1}{2}f(y)
\]
because inequality (\ref{eqn:convexity}) holds strictly for $\omega_{H_0}$ and weakly for all other $\omega_H$. But this means that $f$ is not affine on $S$, a contradiction. We conclude that no two points in $S$ can be separated by a hyperplane in $\H$, which shows that $S$ must be contained in a maximal face $F\in\C$. 

To show that $F$ is contained in $S$ consider the following. We already know that $\omega|_F$ is an affine function. Because $F$ is full-dimensional, $\omega|_F$ can be extended uniquely to an affine function $f:P\rar\R$ and we have $f|_F=\omega|_F$ by construction. Let $x\in P$ be any point. Pick a point $y\not=x$ that lies in the relative interior of $F$ and choose $\lambda\in(0,1)$ such that $\lambda x+(1-\lambda)y$ also lies in the relative interior of $F$. This is possible because $P$ is convex and $F$ is full-dimensional. Then
\begin{eqnarray*}
\lambda f(x) + (1-\lambda)f(y) & = & f(\lambda x + (1-\lambda) y) \\
& = & \omega(\lambda x + (1-\lambda) y) \\
& \leq & \lambda \omega(x) + (1-\lambda) \omega (y).
\end{eqnarray*}
Because $f(y)=\omega(y)$ this implies $f(x)\leq\omega(x)$. Thus $f$ demonstrates that $F$ is a domain of linearity. As $S$ was defined to be inclusion maximal, we can conclude $F=S$.

We have now shown that every such set $S$ is a maximal face of $\C$. But the above argument also shows that any maximal face $F\in\C$ is a domain of linearity. Therefore the proof is complete.
\end{proof}

Of course $\C$ is not going to be simplicial in general. To obtain a simplicial complex, we need to refine $\C$ further. To this end we use the concept of a pulling refinement, see \cite{Sturmfels96}: Given a polytopal complex $\C$ and a vertex $v$ of $\C$ we define the complex $\pull(\C,v)$, obtained by \defn{pulling $v$}, by
\[
  \pull(\C,v) = \{ F\in\C | v\not\in F\} \cup \bigcup_Q \{ \conv(F\cup v) | \text{$F$ a face of $Q$}, v\not\in F \}
\]
where the union runs over all $Q\in\C$ such that $v\in Q$. A \defn{pulling refinement} of $\C$ is a polytopal complex obtained by pulling several vertices of $\C$ in a given order. A \defn{pulling triangulation} of $\C$ is a pulling refinement of $\C$ that is a triangulation. One important property of this method of refinement is that pulling a vertex preserves regularity of the subdivision: If $\C$ is a regular subdivision of a polytope $P$ and $\C'$ is a pulling refinement of $\C$, then $\C'$ is a regular subdivision of $P$. See \cite{HaasePaffenholz06, Lee2004}.

A lattice polytope $P$ such that all pulling triangulations of $P$ are unimodular is called \defn{compressed}. An integral inside-out polytope $P$ with associated relative polytopal complex $\C'\subset\C$ is compressed, if all faces of $\C$ are compressed.

\section{Flow and Tension Polynomials as Ehrhart Functions}
\label{sec:flow-and-tension}

Both the modular and integral variants of the flow and tension polynomials of a graph can be represented as Ehrhart functions of inside-out polytopes. In this section we define these polynomials and summarize the constructions of the corresponding inside-out polytopes. For the general graph-theoretic background we refer the reader to \cite{West01}. A detailed treatment of the material in this section can be found in \cite[Chapters 3-4]{Breuer09}.

Let $G$ be a directed graph with edge set $E$ and vertex set $V$.\footnote{In general $G$ may have multiple edges and/or loops. However, when studying flows, we exclude graphs that have bridges and when studying tensions, we exclude graphs that have loops, as there are no nowhere-zero flows and tensions in these cases, respectively.} A \defn{spanning forest} $T$ of $G$ is a maximal cycle-free spanning subgraph of $G$. With any path $P$ in the underlying undirected graph we can associate a sign vector $\sigma\in\{0,\pm1\}^E$ by letting $\sigma_e=+1$ if $e\in P$ and the orientation of $e$ and the direction of traversal of $e$ are the same, by letting $\sigma_e=-1$ if $e\in P$ and the orientation of $e$ and the direction of traversal of $e$ are opposite, and by letting $\sigma_e=0$ if $e\not\in P$.

Let $A$ be the vertex-edge incidence matrix of $G$. We will interpret $A$ as both a matrix with entries in $\ZZ$ and as a matrix with entries in $\ZZ_k$. In the former case, $\ker A\subset  \RR^E$ is the \defn{flow space} of $G$ and vectors $f\in \ker A\cap \{-k+1,\ldots,k-1\}^E$ are called \defn{$k$-flows} of $G$. In the latter case, vectors $f\in \ker A\subset \ZZ_k^E$ are called \defn{$\ZZ_k$-flows}. We will identify integers with their respective cosets in $\ZZ_k$ and cosets in $\ZZ_k$ with their canonical representatives in $\ZZ$. Thus we may view the set of $k$-flows as being contained in the open cube $(-k,k)^E$ and the set of $\ZZ_k$-flows as being contained in the half-open cube $[0,k)^E$. 

Let $M$ denote the cycle-edge incidence matrix, i.e., the matrix whose row vectors are the sign vectors of all cycles of $G$. Again we interpret $M$ as both a matrix with entries in $\ZZ$ and as a matrix with entries in $\ZZ_k$. In the former case, $\ker M\subset  \RR^E$ is the \defn{tension space} of $G$ and vectors $t\in \ker M\cap \{-k+1,\ldots,k-1\}^E$ are called \defn{$k$-tensions} of $G$. In the latter case, vectors $t\in \ker A\subset \ZZ_k^E$ are called \defn{$\ZZ_k$-tensions}. Again we may view the set of $k$-tensions as being contained in the open cube $(-k,k)^E$ and the set of $\ZZ_k$-tensions as being contained in the half-open cube $[0,k)^E$. 

We call a vector $z$, with entries in $\ZZ$ or in $\ZZ_k$, \defn{nowhere-zero} if $z_e\not=0$ for all $e\in E$. Now we define for all $k\in\NN$
\begin{eqnarray*}
\Flow_G(k) & = & \# \{\text{nowhere-zero $k$-flows of $G$}\} \\
\mFlow_G(k) & = & \# \{\text{nowhere-zero $\ZZ_k$-flows of $G$}\}\\
\Tension_G(k) & = & \# \{\text{nowhere-zero $k$-tensions of $G$}\} \\
\mTension_G(k) & = & \# \{\text{nowhere-zero $\ZZ_k$-tensions of $G$}\}.
\end{eqnarray*}
It turns out that all of these functions are polynomials in $k$. The polynomials $\mFlow_G$ and $\mTension_G$ are called the \defn{modular flow and tension polynomials} of $G$, respectively. These are the classic graph polynomials defined by Tutte. They are evaluations of the Tutte polynomial and can be computed recursively, using a deletion-contraction formula. The modular tension polynomial, $\mTension_G$, is a non-trivial divisior of the chromatic polynomial of $G$. $\Flow_G$ and $\Tension_G$ are called the \defn{integral flow and tension polynomials} of $G$, respectively.  That these are in fact polynomials is a relatively recent result by  Kochol \cite{Kochol02}. They were studied more intensively in \cite{Chen07} and \cite{Dall08}.

From the above definitions it is straightforward to see that the integral flow and tension polynomials of a graph are Ehrhart functions of inside-out polytopes. See \cite{BreuerDall09, Breuer09} for details. Let $\HHH$ denote the hyperplane arrangement consisting of all coordinate hyperplanes $H_{e_i,0}$. Then
\[
    \Flow_G(k) = L_{\ker A\cap(-1,1)^E, \HHH}(k) \text{ and }  \Tension_G(k) = L_{\ker M\cap(-1,1)^E, \HHH}(k).
\]
These constructions are from \cite{BeckZaslavsky06a,BeckZaslavsky06b,Dall08}. The inside-out polytopes $(\ker A\cap(-1,1)^E, \HHH)$ and $(\ker M\cap(-1,1)^E, \HHH)$ are integral (which follows from the total unimodularity of $A$ and $M$, see \cite{Schrijver86}) and compressed (which follows e.g. from  a theorem by Ohsugi and Hibi \cite[Theorem 1.1]{OhsugiHibi01}).

To obtain the modular flow and tension polynomials as Ehrhart functions of inside-out polytopes, we have a bit more work to do. Let $T$ be a spanning forest of $G$. To every non-tree edge $e\in E\setminus T$ with $\tail(e)=u$ and $\head(e)=v$ there corresponds a unique path $P$ in $T$ from $v$ to $u$. Let $\sigma^e$ denote the sign vector of this path. Let $C$ denote the $|T|\times|E\setminus T|$-matrix that has the vectors $\sigma^e|_{T}$ as columns.  Any flow $f$ on $G$ is uniquely determined by $f|_{E\setminus T}$ via $f|_{T}= Cf|_{E\setminus T}$. Any tension $t$ on $G$ is uniquely determined by $t|_{T}$ via $t|_{E\setminus T} = (-C^t)t|_{T}$. We can therefore parameterize nowhere-zero $\ZZ_k$-flows ($\ZZ_k$-tensions) by lattice points in the open unit cube $(0,k)^{E\setminus T}$ (resp. $(0,k)^{T}$), subject to certain constraints that can be expressed via the matrix $C$. Let $\AAA$ denote the set of rows of $C$ and let $\BBB$ denote the set of rows of $-C^t$. Let $\HHH_\AAA$ denote the set of hyperplanes $H_{a,k}$  where $a\in\AAA$ and $k\in\ZZ$ such that $H_{a,k}$ meets $\inter \ (0,1)^{E\setminus T}$. Let $\HHH_\BBB$ denote the set of hyperplanes $H_{b,k}$  where $b\in\BBB$ and $k\in\ZZ$ such that $H_{b,k}$ meets $\inter \ (0,1)^{T}$. Then
\[
  \mFlow_G(k) = L_{(0,1)^{E\setminus T}, \HHH_\AAA}(k) \text{ and }  \mTension_G(k) = L_{(0,1)^{T}, \HHH_\BBB}(k).
\]
The inside-out polytopes $((0,1)^{E\setminus T}, \HHH_\AAA)$ and $((0,1)^{T}, \HHH_\BBB)$ are integral (which follows from the total unimodularity of $C$, see \cite{Schrijver86}) and compressed (which follows from e.g.\ Paco's Lemma, see \cite[Proposition 1.8]{HaasePaffenholz06}). The above constructions were given by Breuer and Sanyal in \cite{BreuerSanyal09,Breuer09}, to which we refer the interested reader for details. See also \cite{BreuerDall09}.

For any spanning forest $T$, the matrix $C$ defined above can also be used to construct inside-out polytopes in the integral case. For a graph $G$ with a fixed spanning forest $T$ we define the \defn{tension polytope} $T_G$ by
\[
T_G=\{t\in \R^T \mid -1\leq C^t t \leq 1, -1\leq t \leq 1\}
\]
and the \defn{flow polytope} $F_G$ by
\[
F_G=\{f\in \R^{E\setminus T} \mid -1\leq -C f \leq 1, -1\leq f \leq 1\}.
\]
Let $\HHH_T$ denote the collection of hyperplanes in $\R^T$ given by $C^t t=0$ together with the coordinate hyperplanes. Then the inside-out polytope $(T_G,\HHH_T)$ is a lattice transform of the inside-out polytope given above for the integral tension case and in particular $L_{(T_G,\HHH_T)}=\Tension_G$. Let $\HHH_F$ denote the collection of hyperplanes in $\R^{E\setminus T}$ given by $C f=0$ together with the coordinate hyperplanes. Then the inside-out polytope $(F_G,\HHH_F)$ is a lattice transform of the inside-out polytope given above for the integral flow case and in particular $L_{(F_G,\HHH_F)}=\Flow_G$. In \cite{FelsnerKnauer08}, the authors prove that (certain unimodular transformations of) generalized tension polytopes give rise to all distributive polytopes and give a connection to alcoved polytopes.

We summarize the content of this section in the following theorem.

\begin{thm}
For every graph $G$, the modular and integral flow and tension polynomials of $G$ can be realized as Ehrhart polynomials of compressed, integral inside-out polytopes.
\end{thm}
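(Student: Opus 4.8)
The plan is to observe that this theorem simply collects the facts assembled over the course of this section, so the proof amounts to verifying, for each of the four polynomials, three assertions: that the polynomial equals the Ehrhart function of an explicitly described inside-out polytope, that this inside-out polytope is integral, and that it is compressed. I would organize the write-up as four short verifications, treating the integral and modular cases separately, and in each case citing the total-unimodularity input for integrality and the external combinatorial result for compressedness.

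For the integral flow and tension polynomials I would take the inside-out polytopes $(\ker A\cap(-1,1)^E,\HHH)$ and $(\ker M\cap(-1,1)^E,\HHH)$, where $\HHH$ is the arrangement of coordinate hyperplanes. The identities $\Flow_G(k)=L_{\ker A\cap(-1,1)^E,\HHH}(k)$ and $\Tension_G(k)=L_{\ker M\cap(-1,1)^E,\HHH}(k)$ follow directly from the definitions of $k$-flows and $k$-tensions as nowhere-zero lattice points of the dilated open cube lying in the flow, resp.\ tension, space, the coordinate hyperplanes in $\HHH$ realizing the nowhere-zero condition. Integrality follows from the total unimodularity of the incidence matrices $A$ and $M$, and compressedness from the theorem of Ohsugi and Hibi cited above.

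For the modular flow and tension polynomials I would first fix a spanning forest $T$ and use the matrix $C$ to parametrize $\ZZ_k$-flows by their restriction to $E\setminus T$ and $\ZZ_k$-tensions by their restriction to $T$. Under this parametrization the nowhere-zero $\ZZ_k$-flows (resp.\ tensions) correspond to the lattice points of the dilated open cube $(0,k)^{E\setminus T}$ (resp.\ $(0,k)^{T}$) that avoid the hyperplanes in $\HHH_\AAA$ (resp.\ $\HHH_\BBB$), yielding $\mFlow_G(k)=L_{(0,1)^{E\setminus T},\HHH_\AAA}(k)$ and $\mTension_G(k)=L_{(0,1)^{T},\HHH_\BBB}(k)$. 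Integrality again follows from the total unimodularity of $C$, and compressedness from Paco's Lemma.

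The main point requiring care, rather than a genuine obstacle, lies in the modular case: one must check that the bijection between nowhere-zero $\ZZ_k$-flows (tensions) and the admissible lattice points of the open cube is exactly the one cut out by $\HHH_\AAA$ (resp.\ $\HHH_\BBB$), i.e.\ that the congruence conditions defining $\ZZ_k$-flows translate precisely into membership in the correct cells of the arrangement intersected with the cube, and that the resulting pairs are genuine inside-out polytopes because each hyperplane in $\HHH_\AAA$ and $\HHH_\BBB$ meets the relative interior of the cube by construction. Since this verification, together with the quoted results on total unimodularity, Ohsugi--Hibi, and Paco's Lemma, is supplied by the references cited in the body of this section, the theorem follows by assembling these four verifications.
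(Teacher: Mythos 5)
Your proposal is correct and follows exactly the paper's route: the theorem is stated as a summary of Section~\ref{sec:flow-and-tension}, whose content consists precisely of the four constructions you describe, with integrality from total unimodularity of $A$, $M$, and $C$, and compressedness from Ohsugi--Hibi in the integral case and Paco's Lemma in the modular case. Your added care about the modular parametrization via the spanning forest matrix $C$ matches the construction of Breuer and Sanyal that the paper cites for those details.
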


\section{Convex Ear Decompositions}
\label{sec:ced}

\begin{definition}
Let $\Delta$ be a $(d-1)$-dimensional simplicial complex. 
A convex ear decomposition of $\Delta$ is an ordered sequence $\Delta_1, \Delta_2,\dots, \Delta_m$ of pure $(d-1)$-dimensional subcomplexes of $\Delta$ such that
\begin{enumerate}
\item $\Delta_1$ is the boundary complex of a $d$-polytope. For each $j\ge 2$, $\Delta_j$ is a $(d-1)$-ball which is a proper subcomplex of the boundary complex of a simpicial $d$-polytope.
\item\label{boundary condition} $\partial \Delta_j =  \Delta_j \cap \bigcup_{i < j} \Delta_i$ for $j\ge2$.
\item\label{union condition} $\Delta = \bigcup_j \Delta_j$.
\end{enumerate}
\end{definition}

Convex ear decompositions are of interest because the existence of a convex ear decomposition of a complex $\Delta$ implies bounds on the $h$-vector of $\Delta$, see Theorem~\ref{thm:ced-bounds} below. To be able to apply this result, we now establish that inside-out polytopes have convex ear decompositions in the following sense.

\begin{thm}
\label{thm:iop-ced}
Let $(P,\H)$ be an inside-out polytope and let $\Delta'\subset\Delta$ be a regular triangulation of the associated relative polytopal complex. Then both $\Delta'$ and $\Delta'|_{\partial P}$ have a convex ear decomposition.
\end{thm}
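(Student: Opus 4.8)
The plan is to take the boundary of $P$ as the first ear and then to peel off the hyperplane slices one at a time, cutting each slice by the hyperplanes that precede it. Throughout we use that $\Delta$, being a regular triangulation of $\C$ with $\bigcup\C=P$, is a regular triangulation of the $d$-polytope $P$, so that Lemma~\ref{lem:realizing-regular-triangulations} applies. Fix an ordering $H_1,\dots,H_N$ of the hyperplanes of $\H$. I would set $\Delta_1=\Delta|_{\partial P}$; by Lemma~\ref{lem:realizing-regular-triangulations}(\ref{itm:bdrycomplex}) this is combinatorially the boundary complex of a simplicial $d$-polytope, as the first ear must be. Then, for $j=1,\dots,N$ in turn, I would look at the slice $H_j\cap P$, a $(d-1)$-polytope since $H_j$ meets $\inter P$, and cut it by the arrangement $\{H_1,\dots,H_{j-1}\}$; the closures of the full-dimensional cells of this induced arrangement, each triangulated by the faces of $\Delta$ it contains, form the next batch of ears (in arbitrary internal order). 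Each such cell $E$ is a $(d-1)$-polytope and, since $\Delta|_E$ is a subcomplex of the regular complex $\Delta$ whose support $E$ is a polytope, heredity of regularity makes $\Delta|_E$ a regular triangulation of $E$. Thus $\Delta|_E$ is a $(d-1)$-ball which, by Lemma~\ref{lem:realizing-regular-triangulations}(\ref{itm:subcomplex}), is combinatorially a subcomplex of the boundary complex of a simplicial $d$-polytope; it is proper, being a ball rather than a sphere, exactly as required of an ear other than the first.

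The heart of the matter — and the step I expect to cause the most trouble — is the boundary condition~(\ref{boundary condition}). This is precisely where one cannot take the whole slices as ears: when $H_i$ and $H_j$ cross in $\inter P$, the trace $H_i\cap H_j\cap P$ runs through the interior of the slice $H_j\cap P$, so full slices would share faces lying in the interiors of later ears and violate the condition. Cutting each slice by its predecessors repairs this. For an ear arising as the triangulation $\Delta|_E$ of a cell $E\subseteq H_j\cap P$, the key geometric fact is that every facet of the $(d-1)$-polytope $E$ lies either in $\partial(H_j\cap P)=H_j\cap\partial P$ or in one of the cutting hyperplanes $H_1,\dots,H_{j-1}$. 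Hence $\partial E$, and so the boundary of this ear, is covered by $\Delta_1$ together with the ears coming from the slices $H_1,\dots,H_{j-1}$, all of which precede it; this gives one inclusion. For the reverse inclusion I would check that any face of $\Delta|_E$ meeting a preceding ear is contained in $\partial E$: such a face lies in $\partial P$, in some $H_i$ with $i<j$, or in a neighbouring cell of the same slice, and in each case it lies in a facet of $E$, hence in $\partial E$. As all ears are subcomplexes of the single triangulation $\Delta$, these set-theoretic inclusions are inclusions of subcomplexes, and condition~(\ref{union condition}) — each facet of $\Delta'$ lies in $\partial P$ or in a unique cell of a unique slice — together with purity is immediate.

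For the second assertion, note that $\Delta'|_{\partial P}=\Delta|_{\partial P}$, since a face of $\Delta'$ contained in $\partial P$ is already a face of $\Delta|_{\partial P}$ and conversely $\partial P\subseteq\bigcup\H\cup\partial P$; by Lemma~\ref{lem:realizing-regular-triangulations}(\ref{itm:bdrycomplex}) this is the boundary complex of a simplicial $d$-polytope and so carries the trivial one-ear convex ear decomposition. The whole difficulty is therefore concentrated in the boundary condition for $\Delta'$, that is, in recognising that subdividing each slice by the already-processed hyperplanes is exactly what keeps every newly attached ear glued to the previous ears only along its boundary.
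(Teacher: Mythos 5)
Your proposal is correct and follows essentially the same route as the paper's proof: the same first ear $\Delta|_{\partial P}$ via Lemma~\ref{lem:realizing-regular-triangulations}(\ref{itm:bdrycomplex}), the same subsequent ears (your cells $E$ of the slice $H_j\cap P$ cut by $H_1,\dots,H_{j-1}$ are exactly the paper's $C_j^\sigma$, indexed by sign vectors $\sigma$ rather than by induced arrangements), the same use of heredity of regularity together with Lemma~\ref{lem:realizing-regular-triangulations}(\ref{itm:subcomplex}), and the same verification of the boundary and union conditions. The only cosmetic difference is your treatment of $\Delta'|_{\partial P}$ as a trivial one-ear decomposition via $\Delta'|_{\partial P}=\Delta|_{\partial P}$, where the paper merely asserts the argument is analogous.
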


\begin{proof}
We show only that $\Delta'$ has a convex ear decomposition. The proof to show that $\Delta'|_{\partial P}$ has a convex ear decomposition is analogous.

Let $\C'\subset \C$ denote the relative polytopal complex associated with the inside-out polytope $(P,\H)$. By Lemma~\ref{lem:iop-regular}, $\C$ is a regular subdivision of $P$. By assumption $\Delta$ is a regular triangulation of $\C$. By transitivity $\Delta$ is a regular triangulation of $P$. By Lemma~\ref{lem:realizing-regular-triangulations}.(\ref{itm:bdrycomplex}) the complex $\Delta|_{\partial P}$ is combinatorially equivalent to the boundary complex of a simplicial $d$-polytope, where $d=\dim P$. So we can define the first element in our convex ear decomposition to be $\Delta_0:=\Delta|_{\partial P}$.

Next we fix a total order $H_1,\ldots, H_l$ of the hyperplanes in $\H$. For any hyperplane $H$, we denote the positive and negative closed half-spaces by $H^+$ and $H^-$, respectively. Now for any $1\leq j \leq l$ and any function $\sigma:\{1,\ldots,j-1\}\rar\{+,-\}$ we define the set $P_j^\sigma$ to be the intersection
\[
  P_j^\sigma := P \cap \bigcap_{1\leq k < j} H^{\sigma(k)}_k.
\]
Note that $P_j^\sigma$ is either empty or $d$-dimensional as the hyperplanes all meet the interior of $P$. Next we define
\[
  C_{j}^{\sigma} := P_j^\sigma \cap H_j \text{ and } \Delta_j^\sigma := \Delta|_{C_j^\sigma}.
\]
For a given $j$, we denote the set of all $\sigma$ such that $\Delta_j^\sigma$ is $(d-1)$-dimensional by $S_j$ and we equip $S_j$ with an arbitrary total order $\prec$.

The complete set of ears for our convex ear decomposition is
\[
  \{\Delta_0\} \cup \{ \Delta_j^\sigma | 1\leq j \leq l, \sigma\in S_j \}.
\]
The total order $\prec$ we impose on this set is defined as follows. $\Delta_0$ is its minimal element. Moreover $\Delta_{j_1}^{\sigma_1} \prec \Delta_{j_2}^{\sigma_2}$, if $j_1 < j_2$ or if $j_1=j_2$ and $\sigma_1\prec\sigma_2$. Now we have to show that this really gives a convex ear decomposition.

(1) We note that for every $j$ and $\sigma$,  the complex $\Delta_j^\sigma$ is a regular triangulation of the $(d-1)$-dimensional polytope $C_j^\sigma$. ($\C'|_{C_j^\sigma}$ is a regular subdivision of $C_j^\sigma$ and $\Delta'|_{C_j^\sigma}$ is a regular triangulation of $\C'|_{C_j^\sigma}$.) Thus it is a pure $(d-1)$-dimensional simplicial complex which is a $(d-1)$-dimensional ball and by Lemma~\ref{lem:realizing-regular-triangulations}.(\ref{itm:subcomplex}) it is combinatorially equivalent to a proper subcomplex of the boundary of a simplicial $d$-polytope. Moreover it is a subcomplex of $\Delta$.

(2) Next we observe that for every $j$
\[
  \Delta_0\cup \bigcup_{1\leq k <j}\bigcup_{\sigma\in S_k} \Delta_j^\sigma = \partial P \cup \bigcup_{1\leq k < j} (H_k\cap P)
\]
which is a superset of $\partial \Delta_j^\sigma$ for every $\sigma\in S_j$.
On the other hand, no interior point of  $\Delta_j^\sigma$ is contained in $\partial P$ or any of the hyperplanes $H_k$ with $k<j$. Moreover, no interior point of $\Delta_j^\sigma$ is contained in any $\Delta_j^{\sigma'}$ for $\sigma'\not=\sigma$. Thus 
\[
  \partial  \Delta_j^\sigma =  \Delta_j^\sigma \cap (\Delta_0\cup \bigcup_{\substack{k<j\\ \sigma\in S_k}} \Delta_k^\sigma \cup \bigcup_{\sigma'\prec\sigma} \Delta_j^{\sigma'}).
\]

(3) Finally we note that on the level of sets
\[
 \Delta_0\cup \bigcup_{1\leq j \leq l}\bigcup_{\sigma\in S_j} \Delta_j^\sigma = \partial P \cup \bigcup_{1\leq j\leq l} H_j\cap P = \Delta'
\]
and thus the same holds on the level of complexes as  $\Delta'$, $\Delta_0$ and the $\Delta_j^\sigma$ are all subcomplexes of $\Delta$. This shows that the above is a convex ear decomposition of $\Delta'$.
\end{proof}

\section{$f$-, $h$- and $h^*$-vectors of polynomials}
\label{sec:fhh-vectors}

Let $p(k)$ be a polynomial in $k$ of degree $d$. We define the \defn{$h^*$-vector} $h^*(p)=(h^*_0,\ldots, h^*_d)$ of $p$ by
\[
    p(k) =\sum_{i=0}^d h^*_i {k+d-i \choose d}.
\]
Here we make use of the fact that the polynomials ${k+d-i \choose d}$  for $0\leq i\leq d$ form a basis of the vector space of polynomials of degree at most $d$.  Equivalently, the $h^*$-vector can be defined by the equation
\[
(1-z)^{d+1}(p(0)+\sum_{k\geq1} p(k)z^k) = \sum_{i=0}^{d} h^*_i z^i.
\]

Similarly, we define the \defn{$h$-vector} $h(p)=(h_0,\ldots, h_{d+1})$ of $p$ by
\[
    p(k) ={k+d \choose d} + \sum_{i=1}^{d+1} h_i {k+d-i \choose d}.
\]
and $h_0=1$. Here we make use of the fact that  the polynomials ${k+d-i \choose d}$  for $1\leq i\leq d+1$ form a basis of the vector space of polynomials of degree at most $d$. Note that the $h$-vector has length $d+1$ while the $h^*$-vector has only length $d$. This is due to the fact that the first entry of the $h^*$-vector is always $1$. Equivalently, the $h$-vector can be defined by the equation
\[
(1-z)^{d+1}(1+\sum_{k\geq1} p(k)z^k) = \sum_{i=0}^{d+1} h_i z^i.
\]

Finally, we define the \defn{$f$-vector} $f(p)=(f_0,\ldots, f_{d})$ of $p$ by
\[
    p(k) =\sum_{i=0}^{d} f_i {k-1 \choose i}.
\]
Here we make use of the fact that  the polynomials ${k-1\choose i}$, $0\leq i\leq d$ form a basis of the vector space of polynomials of degree at most $d$.

Then the $h$-vector and the $h^*$-vector of a given polynomial are related by
\[
  h^*_i = h_i + (-1)^{d+i}{d+1\choose i} h_{d+1}
\]
for $0\leq i\leq d$ subject to the constraint that $h_0=1$. Similarly, the $f$- and the $h$-vector are related by
\begin{eqnarray}
\label{eqn:f-vs-h}
  h_i = (-1)^{i} {d+1 \choose i} + \sum_{k=0}^{i-1} (-1)^{i-k-1} {d-k \choose i-k-1} f_{k}.
\end{eqnarray}
for $0\leq i\leq d+1$.

Of course we are mainly interested in the case where $p$ is the Ehrhart polynomial of some polytopal complex. Let $\C$ be a $d$-dimensional polytopal complex such that all vertices of $\C$ are lattice points. Let $L_\C$ denote the Ehrhart polynomial of $\C$. Then we define $h^*(\C):=h^*(L_\C)$. When applying the above definition of an $h^*$-vector in this case, it is important to note that $L_\C(0)$ denotes the value of the Ehrhart polynomial at zero and not the value of the lattice point enumerator at zero. If $\C$ is a polytope, then $h^*(\C)$ is the classical $h^*$-vector or Ehrhart-$\delta$-vector of a lattice polytope.

For a $d$-dimensional simplicial complex $\Delta$, the $f$-vector $f(\Delta)=(f_0,\ldots,f_d)$ is classically defined by letting $f_i$ equal the number of $i$-dimensional simplices in $\Delta$. The $h$-vector $h(\Delta)$ is then defined by the relation (\ref{eqn:f-vs-h}). See \cite{Ziegler95, BilleraBjorner2004, Stanley1996}. Note that in this case the Euler characteristic $\chi(\Delta)$ of $\Delta$ is
\[
   \chi(\Delta)=1+(-1)^{d}h_{d+1}(\Delta).
\]
Moreover, if $\Delta$ is unimodular, then $L_\Delta(0)=h^*_0(\Delta)=\chi(\Delta)$, see Lemma~\ref{lem:vectors} below.

To our knowledge $f$- and $h$-vectors of polynomials have previously not been explicitly defined. Our choice of terminology is justified by the following well-known fact.

\begin{lem}
\label{lem:vectors}
Let $\Delta$ be a unimodular triangulation of an integral polytopal complex $\C$.
\begin{enumerate}
\item $f(\Delta)=f(L_\C)$.
\item  $h(\Delta)=h(L_\C)$.
\item If $\C$ is a topological ball, then $h^*(\C)=h(\C)$.
\end{enumerate}
\end{lem}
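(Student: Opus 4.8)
The plan is to prove the three parts in order, each reducing to a comparison of generating-function identities, where the key input is that a unimodular triangulation lets us count lattice points simplex-by-simplex. First I would recall the fundamental enumerative fact: if $\Delta$ is a unimodular triangulation of $\C$, then every $i$-dimensional simplex of $\Delta$ is lattice isomorphic to the standard $i$-simplex, whose open dilates contain exactly $\binom{k-1}{i}$ lattice points. Since the half-open decomposition of $\C$ into the relative interiors of the simplices of $\Delta$ is a disjoint union, counting lattice points in $k\cdot\C$ by summing over faces gives
\[
  L_\C(k) = \sum_{i=0}^{d} f_i(\Delta)\binom{k-1}{i},
\]
where $f_i(\Delta)$ is the number of $i$-simplices. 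This is exactly the defining relation of $f(L_\C)$ from Section~\ref{sec:fhh-vectors}, so comparing coefficients in the basis $\binom{k-1}{i}$ (which is a basis, hence coefficients are unique) yields $f(\Delta)=f(L_\C)$, proving part~(1).

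For part~(2) I would simply invoke that the $f$-vector determines the $h$-vector: the combinatorial $h(\Delta)$ is \emph{defined} from $f(\Delta)$ by relation~(\ref{eqn:f-vs-h}), and the polynomial $h(L_\C)$ is obtained from $f(L_\C)$ by the \emph{same} relation~(\ref{eqn:f-vs-h}). Since part~(1) gives $f(\Delta)=f(L_\C)$ and the map $f\mapsto h$ is one and the same linear transformation in both settings, we get $h(\Delta)=h(L_\C)$ immediately, with no further computation.

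For part~(3), assume $\C$ is a topological ball of dimension $d$. The relation between $h^*$- and $h$-vectors stated in Section~\ref{sec:fhh-vectors} reads $h^*_i = h_i + (-1)^{d+i}\binom{d+1}{i}h_{d+1}$, so it suffices to show that the top $h$-entry vanishes, $h_{d+1}(\C)=h_{d+1}=0$; then $h^*_i=h_i$ for all $i$. Here I would use the Euler-characteristic identity recorded just before the lemma, namely $\chi(\Delta)=1+(-1)^{d}h_{d+1}(\Delta)$. Since $\C$ is a topological ball, so is its triangulation $\Delta$, and a ball is contractible, so $\chi(\Delta)=1$; substituting gives $h_{d+1}(\Delta)=0$. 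Combining this with part~(2), $h_{d+1}(\C)=h_{d+1}(L_\C)=h_{d+1}(\Delta)=0$, and the $h^*$/$h$ relation collapses to $h^*(\C)=h(\C)$.

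The main obstacle is really part~(1), and specifically making the half-open disjoint decomposition rigorous: one must argue that the lattice points of $k\cdot\C$ partition according to which \emph{relatively open} simplex of $\Delta$ they lie in, and that unimodularity forces each open $i$-simplex to contribute exactly $\binom{k-1}{i}=L_{\inter\sigma}(k)$ lattice points. Once this counting identity is established, parts~(2) and~(3) are formal consequences of the linear-algebra relations already set up in Section~\ref{sec:fhh-vectors} together with the contractibility of a ball; I expect them to be essentially immediate.
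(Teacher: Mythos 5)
Your proposal is correct and follows essentially the same route as the paper: part (1) via the disjoint decomposition into relative interiors of unimodular simplices, each open $i$-simplex contributing $\binom{k-1}{i}$ lattice points; part (2) as a formal consequence of applying the same relation (\ref{eqn:f-vs-h}) to equal $f$-vectors; and part (3) via the Euler characteristic of a ball being $1$. The paper phrases (3) as $1=\chi(\Delta)=h^*_0(\Delta)$ rather than explicitly deducing $h_{d+1}=0$, but this is the same argument in light of $\chi(\Delta)=1+(-1)^d h_{d+1}(\Delta)$.
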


\begin{proof}
(1) follows directly from the fact that the Ehrhart polynomial of the relative interior of a unimodular $i$-simplex is ${k-1 \choose i}$. (2) follows from (1) by the relation between the $f$- and $h$-vectors. For shellable complexes, see \cite{Ziegler95}, this can also be seen directly by noting that the Ehrhart polynomial of a unimodular $d$-simplex that has $i$ faces removed is ${k+d-i \choose d}$.  (3) follows from $1=\chi(\Delta)=h^*_0(\Delta)$.
\end{proof}

Notice that for a lattice polytope $P$, the constant term of $L_P$ is 1 and hence $h(L_P)=h^*(L_P)$.

\begin{lem}
\label{lem:linearity}
Let $p$ and $q$ be polynomials with  $\deg(p)=\deg(q)=d$.
\begin{enumerate}
\item If $\deg(p+q)=d$, then $h^*(p+q)=h^*(p)+h^*(q)$.
\item If $\deg(p+q)=d-1$, then $h^*_i(p+q)=\sum_{j=0}^i h^*_j(p) + h^*_j(q)$ for $0\leq i \leq d-1$.
\end{enumerate}
\end{lem}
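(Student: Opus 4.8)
\section*{Proof proposal for Lemma~\ref{lem:linearity}}

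The plan is to regard both parts as statements about the coordinate vectors of $p$, $q$ and $p+q$ in the bases $\binom{k+d-i}{d}$, and to make explicit that this basis depends on the degree in which we expand. For part (1), since $\deg(p)=\deg(q)=\deg(p+q)=d$, all three $h^*$-vectors are computed with respect to the \emph{same} basis $\binom{k+d-i}{d}$, $0\le i\le d$. Writing $p(k)=\sum_{i=0}^d h^*_i(p)\binom{k+d-i}{d}$ and the analogous expansion for $q$ and adding them, the coordinates add termwise; because expansion in a fixed basis is unique, the resulting expansion of $p+q$ is its $h^*$-expansion, so $h^*(p+q)=h^*(p)+h^*(q)$. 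This part is immediate once the dependence of the basis on $d$ is spelled out.

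For part (2) the subtlety, and the main obstacle, is exactly that this basis depends on the degree: because $\deg(p+q)=d-1$, the vector $h^*(p+q)$ is defined relative to $\binom{k+d-1-i}{d-1}$, which is a \emph{different} basis from the one used for $p$ and $q$. To reconcile the two bases I would pass to the generating-function characterization given in Section~\ref{sec:fhh-vectors}. Writing $F_r(z)=r(0)+\sum_{k\ge1}r(k)z^k$, the definition yields $(1-z)^{d+1}F_p(z)=\sum_{i=0}^d h^*_i(p)z^i$ and the same for $q$, whereas for $p+q$, which has degree $d-1$, it yields $(1-z)^d F_{p+q}(z)=\sum_{i=0}^{d-1}h^*_i(p+q)z^i$. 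Since $F_{p+q}=F_p+F_q$, adding the first two relations gives $(1-z)^{d+1}F_{p+q}(z)=\sum_{i=0}^d\bigl(h^*_i(p)+h^*_i(q)\bigr)z^i$.

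The final step is to multiply the degree-$(d-1)$ relation by $(1-z)$ and equate it with the degree-$d$ relation, obtaining
\[
(1-z)\sum_{i=0}^{d-1}h^*_i(p+q)\,z^i=\sum_{i=0}^d\bigl(h^*_i(p)+h^*_i(q)\bigr)z^i .
\]
Comparing coefficients of $z^i$ gives $h^*_0(p+q)=h^*_0(p)+h^*_0(q)$ together with $h^*_i(p+q)-h^*_{i-1}(p+q)=h^*_i(p)+h^*_i(q)$ for $1\le i\le d-1$. This telescoping recursion has the unique solution $h^*_i(p+q)=\sum_{j=0}^i\bigl(h^*_j(p)+h^*_j(q)\bigr)$, which is the claimed identity. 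The coefficient of $z^d$ returns $\sum_{j=0}^d\bigl(h^*_j(p)+h^*_j(q)\bigr)=0$, which merely re-expresses the hypothesis that the leading degree-$d$ term cancels; it is a consistency check rather than an additional assumption. The only genuine care needed is to verify that the degree drop makes $(1-z)^d F_{p+q}$, and not $(1-z)^{d+1}F_{p+q}$, the correct defining relation for $h^*(p+q)$ — and this is precisely the point at which the two bases are reconciled.
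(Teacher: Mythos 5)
Your proof is correct and follows essentially the same route as the paper: part (1) by uniqueness of expansion in a fixed basis, and part (2) via the generating-function identity $F_{p+q}=F_p+F_q$, multiplying the degree-$(d-1)$ relation by $(1-z)$ and telescoping the resulting coefficient recursion --- this is exactly the paper's displayed computation, which it concludes ``by induction.'' Your observation that the $z^d$ coefficient encodes the cancellation of leading terms is a nice consistency check that the paper leaves implicit, but it is not a different argument.
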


\begin{proof}
(1) is immediate from the definition. (2) can be seen by observing that 
\[
\frac{\sum_{j=0}^{d-1} h^*_j(p+q) z^j}{(1-z)^{d-1}} = \frac{\sum_{j=0}^{d} h^*_j(p) z^j}{(1-z)^{d}} + \frac{\sum_{j=0}^{d} h^*_j(q) z^j}{(1-z)^{d}},
\]
which implies
\[
\sum_{j=0}^{d} (h^*_j(p)+h^*_j(q))z^i = h^*_0(p+q) z^0 + \sum_{j=1}^{d-1} (h^*_j(p+q) - h^*_{j-1}(p+q))z^j - h^*_{d-1}(p+q)z^{d}
\]
from which the claim follows by induction.
\end{proof}

It turns out that the $h^*$-vectors of the polynomials $k^d$ and $(2k+1)^d$ are given by Eulerian and MacMahon numbers, respectively. Given $n\in\NN$ and $0\leq i \leq n$ we define the \defn{Eulerian number} $A(n,i)$ and the \defn{MacMahon number} $B(n,i)$ by
\begin{eqnarray*}
  A(n,i) & = &  \sum_{j=0}^i (-1)^j{n+1 \choose j}(i-j)^n \\
  B(n,i) & = & \sum_{j=1}^i (-1)^{i-j}{n \choose i-j}(2j-1)^{n-1}.
\end{eqnarray*}
These are sequences A008292 and A060187 in the Online Encyclopedia of Integer Sequences \cite{Sloane} with the exception that we also consider $A(n,0)=B(n,0)=0$. If we let $A(n,n+1)=0$, then we have for $0\leq i\leq n$
\begin{eqnarray}
\label{eqn:eulerian-1}
h_i^*(k^n) & = & A(n,i), \\
\label{eqn:eulerian-2} h_i^*((k+1)^n) & = & A(n,i+1), \\
\label{eqn:macmahon} h_i^*((2k+1)^n) & = & B(n+1,i+1).
\end{eqnarray}
All of these identities are straightforward to compute, see also \cite[Section 2.2]{BeckRobins07}.

\section{Enumerative Consequences}
\label{sec:enumerative-consequences}

For positive integers $h$ and $i$ there exists a unique sequence of integers $a_i>a_{i-1}>\ldots>a_j \geq j \geq 1$ such that
\[
  h= {a_i \choose i} + {a_{i-1} \choose {i-1}} + \cdots + {a_j\choose j}.
\]
We then define
\[
  h^{<i>} := {a_i + 1 \choose i + 1} + {a_{i-1} + 1 \choose {i}} + \cdots + {a_j + 1\choose j + 1}.
\]
Now, a sequence of non-negative integers $(h_0,\ldots,h_d)$ is an \defn{$M$-vector} if $h_0=1$ and $h_{i+1} \leq h_{i}^{<i>}$ for all $1\leq i\leq d-1$. We say that a vector $h=(h_0,\ldots,h_d)$ of $d+1$ integers satisfies the \defn{$g$-constraints} if 
\begin{enumerate}
\item $h_0 \leq h_1 \leq \ldots \leq h_{\floor{d/2}}$,
\item $h_i\leq h_{d-i}$ for $i\leq d/2$,
\item $(h_0,h_1-h_0,h_2-h_1, \ldots,h_{\ceil{d/2}}-h_{\ceil{d/2}-1})$ is an $M$-vector.
\end{enumerate}

\begin{thm}[Chari, Swartz]
\label{thm:ced-bounds} Let $\Delta$ denote an abstract $(d-1)$-dimensional simplical complex with a convex ear decomposition. Then the $h$-vector of $\Delta$ satisfies the $g$-constraints.
\end{thm}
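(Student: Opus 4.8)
Because the three $g$-constraints are assertions about the $h$-vector of $\Delta$, the plan is to read them off from the way this $h$-vector is assembled out of the individual ears. The starting point is an additivity statement. Since consecutive ears meet only along boundaries (the condition $\partial\Delta_j=\Delta_j\cap\bigcup_{i<j}\Delta_i$) and the ears cover $\Delta$, a short check shows that every face of $\Delta$ occurs as an interior face of exactly one ear, where all of $\Delta_1$ counts as interior: if $j$ is minimal with $F\in\Delta_j$, then $F\notin\partial\Delta_j$, while for any later ear containing $F$ the same face lies on that ear's boundary. Hence the $f$-vector, and therefore by linearity the $h$-vector, is additive,
\[
  h(\Delta)=h(\Delta_1)+\sum_{j\ge 2} h(\Delta_j,\partial\Delta_j),
\]
where $h(\Delta_j,\partial\Delta_j)$ is the relative $h$-vector recording the interior faces of $\Delta_j$. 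I would also record at the outset that $\Delta$ is Cohen--Macaulay: each ear is a ball or sphere arising from a polytope, and gluing such pieces along their boundaries in the prescribed order keeps the complex Cohen--Macaulay \cite{Ziegler95}; this is what licenses the commutative-algebra description of the $h$-vector used below.

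Next I would analyse the two kinds of summand. The first ear $\Delta_1$ is the boundary complex of a simplicial $d$-polytope, so by the $g$-theorem \cite{Stanley1996} its $h$-vector is symmetric, $h_i(\Delta_1)=h_{d-i}(\Delta_1)$, and its $g$-vector is an $M$-vector. For $j\ge 2$ the ear $\Delta_j$ is a $(d-1)$-ball that is a proper subcomplex of the boundary of a simplicial $d$-polytope $Q_j$. Relative Dehn--Sommerville duality for balls gives $h_i(\Delta_j,\partial\Delta_j)=h_{d-i}(\Delta_j)$, so the relative summand $r^{(j)}$ is nonnegative; combining this with the fact that the ordinary $h$-vector of a proper-subcomplex ball is weighted toward the bottom (a consequence of the $g$-theorem applied to $\partial Q_j$) yields both $r^{(j)}_{i-1}\le r^{(j)}_i$ for $i\le\ceil{d/2}$ and $r^{(j)}_i\le r^{(j)}_{d-i}$ for $i\le d/2$. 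Summing these with the symmetric, first-half-monotone vector $h(\Delta_1)$ produces $g$-constraint (1), namely $h_0\le\cdots\le h_{\floor{d/2}}$, and $g$-constraint (2), namely $h_i\le h_{d-i}$.

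The genuinely hard part is $g$-constraint (3), that the $g$-vector $(h_0,h_1-h_0,\ldots,h_{\ceil{d/2}}-h_{\ceil{d/2}-1})$ is an $M$-vector. Additivity is useless here, since $M$-vectors are not closed under addition (a sum of sequences with leading entry $1$ has leading entry larger than $1$). Instead I would argue algebraically. Fix an infinite field $\mathbb{F}$, let $\Theta$ be a linear system of parameters for the Stanley--Reisner ring $\mathbb{F}[\Delta]$, and let $A=\mathbb{F}[\Delta]/\Theta$ be the resulting Artinian reduction, so $\dim_{\mathbb{F}}A_i=h_i(\Delta)$. The crux is to produce a linear form $\omega\in A_1$ for which multiplication $\cdot\,\omega\colon A_{i-1}\to A_i$ is injective for all $1\le i\le\ceil{d/2}$. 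Granting such a weak Lefschetz element, injectivity immediately gives $h_{i-1}\le h_i$ (reproving (1)), and since then $g_i=\dim A_i-\dim A_{i-1}=\dim(A/\omega A)_i$, the $g$-vector is the Hilbert function of the standard graded algebra $A/\omega A$ and is therefore an $M$-vector by Macaulay's theorem \cite{Stanley1996}.

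The main obstacle is precisely the construction of this Lefschetz element for the whole complex out of the ear decomposition. The $g$-theorem supplies a Lefschetz element in the Artinian reduction of each $Q_j$, and the filtration of $\Delta$ by the partial unions $\bigcup_{i\le j}\Delta_i$ gives short exact sequences relating successive stages to the relative modules of the ears; one then chooses $\omega$ generically and verifies that injectivity in degrees up to the middle is inherited at each stage of the filtration. Making this propagation precise is the technical heart of the argument and is where I expect the difficulty to lie, the remaining verifications (additivity, relative Dehn--Sommerville, and the $M$-vector bookkeeping) being routine once $\omega$ is in hand. This is the content of the theorem of Chari and Swartz cited above.
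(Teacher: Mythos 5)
The paper does not actually prove this theorem: it is imported verbatim as \cite[Theorem 14]{HershSwartz08}, with constraints (1)--(2) attributed to Chari \cite{Chari97} and constraint (3) to Swartz \cite{Swartz06}. So there is no in-paper argument to compare against, and what you have written is a reconstruction of the literature proofs. As a roadmap it is accurate: your minimality argument showing that the interiors of the ears partition the faces of $\Delta$ is correct, so the additivity $h(\Delta)=h(\Delta_1)+\sum_{j\ge 2}h(\Delta_j,\partial\Delta_j)$ holds; the relative Dehn--Sommerville identity $h_i(\Delta_j,\partial\Delta_j)=h_{d-i}(\Delta_j)$ is the standard fact for balls; and the division of labor --- combinatorial summation for (1)--(2) \`a la Chari, a weak Lefschetz element plus Macaulay's theorem for (3) \`a la Swartz --- is exactly how the cited results are in fact proved, including your correct observation that additivity cannot give (3) because $M$-vectors are not closed under addition.

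Judged as a self-contained proof, however, your text has two unproved steps, the second of which you concede. First, the per-ear inequalities you invoke, $r^{(j)}_{i-1}\le r^{(j)}_i$ for $i\le\ceil{d/2}$ and $r^{(j)}_i\le r^{(j)}_{d-i}$ for $i\le d/2$ (equivalently, $h_{d-i}(\Delta_j)\le h_i(\Delta_j)$ and monotonicity of $h(\Delta_j)$ in its top half), are not routine bookkeeping consequences of the $g$-theorem applied to $\partial Q_j$: they themselves require a Lefschetz-type argument for the pair (ball, complement) inside the sphere $\partial Q_j$, so ``weighted toward the bottom'' is an assertion standing in for Chari's actual lemma, not a derivation. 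Second, and more seriously, the whole of constraint (3) --- producing $\omega\in A_1$ with $\cdot\,\omega\colon A_{i-1}\to A_i$ injective for $i\le\ceil{d/2}$ and propagating this injectivity through the filtration by partial unions of ears --- is deferred with the closing sentence that this ``is the content of the theorem of Chari and Swartz,'' which is circular when the theorem is what is to be proved. A smaller quibble: your one-line justification of Cohen--Macaulayness via gluing handles global homology but omits the link conditions in Reisner's criterion (the conclusion is nonetheless true and standard for convex-ear-decomposable complexes). In short, your sketch is faithful to the actual proofs and would serve well as an expansion of the paper's bare citation, but as a blind proof it stops precisely where the real work begins.
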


This is \cite[Theorem 14]{HershSwartz08}, where the first two constraints are due to Chari \cite{Chari97} and the last constraint is due to Swartz \cite{Swartz06}.

Combining this result with our Theorem~\ref{thm:iop-ced} yields the following result about Erhart polynomials of inside-out polytopes.

\begin{thm}
\label{thm:iop-bounds}
Let $(P,\H)$ be an integral inside-out polytope in which all faces are compressed. Then the $h$-vector of the polynomial $L_P(k) - L_{(P,\H)}(k)$ satisfies the $g$-constraints.
\end{thm}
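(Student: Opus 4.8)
The plan is to recognize $L_P - L_{(P,\H)}$ as the Ehrhart polynomial of the relative complex $\C'$ and then feed a suitable triangulation of $\C'$ through the machinery already assembled. First I would record the set partition
\[
  P = \bigl(\inter P \setminus \bigcup\H\bigr) \;\sqcup\; \bigl(\bigcup\H \cap \inter P\bigr) \;\sqcup\; \partial P,
\]
which follows from $P = \inter P \sqcup \partial P$ together with the splitting of $\inter P$ according to whether a point lies on $\bigcup\H$. The middle and last pieces together form the support $\bigcup\C'$, since $\C'$ is by definition the subcomplex of the cell subdivision $\C$ consisting of those faces contained in $\bigcup\H \cup \partial P$. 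Scaling by $k$ and counting lattice points then gives $L_P(k) - L_{(P,\H)}(k) = L_{\C'}(k)$; all three functions are genuine polynomials because $(P,\H)$ is integral, so that $P$ and every face of $\C$, hence of $\C'$, is a lattice polytope.

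Next I would choose a triangulation of $\C'$ that is at once regular and unimodular, as each property is consumed exactly once below. By Lemma~\ref{lem:iop-regular}, $\C$ is a regular subdivision of $P$; a pulling triangulation $\Delta$ of $\C$ stays regular because pulling preserves regularity, and it is unimodular because all faces of $\C$ are compressed. Its restriction $\Delta' := \Delta|_{\C'}$ is then a regular, unimodular triangulation of $\C'$, pure of dimension $d-1$ with $d = \dim P$ (since $\partial P$ and each slice $H \cap P$ are $(d-1)$-dimensional). Two facts now apply to $\Delta'$. On the one hand, Lemma~\ref{lem:vectors}(2) identifies the $h$-vector of the simplicial complex $\Delta'$ with the $h$-vector of the polynomial $L_{\C'}$. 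On the other hand, Theorem~\ref{thm:iop-ced}, applied to the regular triangulation $\Delta' \subset \Delta$ of the relative complex, shows that $\Delta'$ admits a convex ear decomposition.

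Finally I would invoke Theorem~\ref{thm:ced-bounds}: a $(d-1)$-dimensional simplicial complex with a convex ear decomposition has an $h$-vector satisfying the $g$-constraints. Chaining the identities,
\[
  h\bigl(L_P - L_{(P,\H)}\bigr) = h(L_{\C'}) = h(\Delta'),
\]
so this common $h$-vector satisfies the $g$-constraints, which is exactly the assertion. The argument is largely an assembly of the preceding results; the only place where the hypotheses are genuinely used, and the one step demanding care, is the simultaneous requirement that the chosen triangulation be regular (so that Theorem~\ref{thm:iop-ced} furnishes a convex ear decomposition) and unimodular (so that Lemma~\ref{lem:vectors} equates the polynomial and simplicial $h$-vectors). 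The hypothesis that all faces are compressed is precisely what allows a single pulling triangulation to meet both demands at once.
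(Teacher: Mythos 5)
Your proposal is correct and follows essentially the same route as the paper: a pulling triangulation of $\C'\subset\C$, which is regular by Lemma~\ref{lem:iop-regular} and preservation of regularity under pulling, and unimodular by compressedness, fed into Theorem~\ref{thm:iop-ced}, Theorem~\ref{thm:ced-bounds}, and Lemma~\ref{lem:vectors} exactly as in the paper's proof. Your explicit set-partition justification of $L_P - L_{(P,\H)} = L_{\C'}$ merely spells out a step the paper leaves implicit, so there is no substantive difference.
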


\begin{proof}
Let $\Delta'\subset\Delta$ be any pulling triangulation of the relative polytopal complex $\C'\subset\C$ associated with $(P,\H)$. By Lemma~\ref{lem:iop-regular}, $\C$ is a regular subdivision and, as $\Delta$ arises from $\C$ by pulling vertices, $\Delta$ is a regular triangulation of $P$. By Theorem~\ref{thm:iop-ced} we conclude that $\Delta'$ has a convex ear decomposition. So, by Theorem~\ref{thm:ced-bounds}, the $h$-vector of $\Delta'$ satisfies the $g$-constraints. All faces of $(P,\H)$ are compressed, so all faces of $\Delta$ are unimodular. Thus, by Lemma~\ref{lem:vectors}, the $h$-vectors of $\Delta'$ and $L_{\Delta'}$ coincide and we conclude that
\[
  h(L_P(k) - L_{(P,\H)}(k)) = h(L_{\Delta'}(k)) =  h(\Delta')
\]
satisfies the $g$-constraints as desired.
\end{proof}

This implies bounds on the coefficients of modular flow and tension polynomials of graphs, by virtue of the fact that in these cases $P$ is a unit cube and thus $L_P(k)=(k+1)^d$.

\begin{thm}
\label{thm:bounds-modular}
Let $p$ denote the modular flow polynomial or the modular tension polynomial of a graph. Let $d$ denote the degree of $p$. Then the $h$-vector of the polynomial $(k+1)^d-p(k)$ satisfies the $g$-constraints.
\end{thm}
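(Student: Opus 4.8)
The plan is to recognize that Theorem~\ref{thm:bounds-modular} is essentially an immediate application of Theorem~\ref{thm:iop-bounds}, once we identify the right inside-out polytope and compute its $L_P$. The content has already been established: Section~\ref{sec:flow-and-tension} tells us that the modular flow polynomial is $\mFlow_G(k) = L_{(0,1)^{E\setminus T}, \HHH_\AAA}(k)$ and the modular tension polynomial is $\mTension_G(k) = L_{(0,1)^{T}, \HHH_\BBB}(k)$, and in both cases the underlying polytope $P$ is a unit cube of some dimension $d$ (namely $d=|E\setminus T|$ in the flow case and $d=|T|$ in the tension case). The same section asserts these inside-out polytopes are integral and compressed, so all faces are compressed. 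Thus the hypotheses of Theorem~\ref{thm:iop-bounds} are met.

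First I would invoke Theorem~\ref{thm:iop-bounds} directly: for the integral, compressed inside-out polytope $(P,\H)$ realizing $p$, the $h$-vector of $L_P(k) - L_{(P,\H)}(k)$ satisfies the $g$-constraints. Second, I would compute $L_P(k)$ for the unit cube. Since $P=(0,1)^d$ is a half-open/open unit cube, its Ehrhart polynomial (as the lattice-point count of the closed cube, which is the relevant polynomial for the complex $\C$) is $L_P(k) = (k+1)^d$, exactly as the sentence preceding the theorem states. Third, I would identify $L_{(P,\H)}(k) = p(k)$, which is precisely the realization from Section~\ref{sec:flow-and-tension}. Substituting these two facts into the conclusion of Theorem~\ref{thm:iop-bounds} gives that the $h$-vector of $(k+1)^d - p(k)$ satisfies the $g$-constraints, which is exactly the claim.

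The one point requiring a small amount of care is the bookkeeping of $L_P$. The Ehrhart function appearing in Theorem~\ref{thm:iop-bounds} is the Ehrhart polynomial $L_P$ of the polytope $P$ viewed as the support of the complex $\C$, i.e.\ the lattice-point enumerator of the \emph{closed} unit cube $[0,1]^d$, whose value at $k$ is $(k+1)^d$; it is \emph{not} the count of interior points. I would make this explicit, since the convention (flagged in Section~\ref{sec:fhh-vectors}) that $L_\C(0)$ means the polynomial evaluated at zero rather than the naive lattice-point count is exactly what makes $(k+1)^d$ the correct choice. With that convention fixed, there is nothing further to prove.

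I do not expect any serious obstacle here, as this theorem is a corollary that packages the general result for the two specific geometric realizations already supplied. If anything, the only thing to guard against is a dimension mismatch or an off-by-one in the degree $d$; since $d$ is defined to be the degree of $p$ and the cube dimension equals that degree in both cases, this is consistent, but I would verify it explicitly for each of the flow and tension cases to be safe.
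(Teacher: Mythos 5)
Your proposal is correct and follows essentially the same route as the paper's own proof: both apply Theorem~\ref{thm:iop-bounds} to the compressed integral inside-out polytopes from Section~\ref{sec:flow-and-tension} whose underlying polytope is a unit cube, and substitute $L_{[0,1]^d}(k)=(k+1)^d$. Your extra remarks on the closed-cube convention for $L_P$ and the degree bookkeeping are sound but add nothing beyond what the paper's two-line argument already contains.
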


\begin{proof}
As we have seen in Section~\ref{sec:flow-and-tension}, the modular flow or tension polynomial of any graph can be realized as the Ehrhart polynomial of an integral inside-out polytope $(P,\H)$ with compressed faces, where $P=[0,1]^d$ and $d$ denotes the degree of the polynomial. The claim now follows from Theorem~\ref{thm:iop-bounds} and the fact that $L_{[0,1]^d}(k)=(k+1)^d$.
\end{proof}

\section{The integral case}
\label{sec:integral-case}

In the case of the integral flow and tension polynomials, the inside-out polytopes $(P,\H)$ are not of the form $P=[0,1]^d$. Rather, $P$ will depend on the graph, so, given a polynomial $p$, we cannot compute a polynomial $p'$ such that if $p$ is a, say, flow polynomial then $p'$ satisfies the $g$-constraints. The best we can say is the following.

\begin{thm}
\label{thm:bounds-integral}
Let $p$ denote the integral flow polynomial or the integral tension polynomial of a graph $G$. Then $L_{F_G}(k)-p(k)$ or $L_{T_G}(k)-p(k)$, respectively, satisfy the $g$-constraints.
\end{thm}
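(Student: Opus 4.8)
The plan is to reduce Theorem~\ref{thm:bounds-integral} directly to Theorem~\ref{thm:iop-bounds}, which has already done the heavy lifting. The statement of Theorem~\ref{thm:bounds-integral} concerns the polynomials $L_{F_G}(k)-p(k)$ and $L_{T_G}(k)-p(k)$, where $p$ is the integral flow or tension polynomial. The key observation is that Section~\ref{sec:flow-and-tension} exhibits $p$ as the Ehrhart polynomial of an inside-out polytope: we have $L_{(F_G,\H_F)}=\Flow_G$ and $L_{(T_G,\H_T)}=\Tension_G$, where $(F_G,\H_F)$ and $(T_G,\H_T)$ are the flow and tension inside-out polytopes. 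So the differences appearing in the statement are exactly $L_{F_G}(k)-L_{(F_G,\H_F)}(k)$ and $L_{T_G}(k)-L_{(T_G,\H_T)}(k)$, which is precisely the shape of polynomial that Theorem~\ref{thm:iop-bounds} controls.

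First I would check that the hypotheses of Theorem~\ref{thm:iop-bounds} are met by the flow and tension inside-out polytopes. That theorem requires the inside-out polytope to be integral and to have all faces compressed. Both conditions are established in Section~\ref{sec:flow-and-tension}: the integral flow and tension inside-out polytopes built from the cube $(-1,1)^E$ together with the coordinate hyperplanes are integral (by total unimodularity of $A$ and $M$) and compressed (by the theorem of Ohsugi and Hibi). Since $(F_G,\H_F)$ and $(T_G,\H_T)$ are defined in Section~\ref{sec:flow-and-tension} as lattice transforms of those inside-out polytopes, and lattice isomorphism preserves both integrality and the property of being compressed (as well as Ehrhart polynomials), the transformed polytopes inherit both hypotheses. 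Thus Theorem~\ref{thm:iop-bounds} applies verbatim.

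Having verified the hypotheses, the conclusion of Theorem~\ref{thm:iop-bounds} states that the $h$-vector of $L_P(k)-L_{(P,\H)}(k)$ satisfies the $g$-constraints. Taking $P=F_G$ with $\H=\H_F$ gives that the $h$-vector of $L_{F_G}(k)-\Flow_G(k)$ satisfies the $g$-constraints, and taking $P=T_G$ with $\H=\H_T$ gives the analogous statement for $L_{T_G}(k)-\Tension_G(k)$. Since $\Flow_G=p$ in the flow case and $\Tension_G=p$ in the tension case, this is exactly the assertion of Theorem~\ref{thm:bounds-integral}. The argument is essentially a bookkeeping step that matches the abstract result of Theorem~\ref{thm:iop-bounds} against the concrete Ehrhart realizations catalogued in Section~\ref{sec:flow-and-tension}.

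The only genuine point requiring care, and the place I expect the subtlety to lie, is the invariance claim under lattice transforms: one must be confident that the defining hypotheses (compressedness of all faces of the associated complex $\C$, and integrality) transfer across the stated lattice isomorphism, and that the Ehrhart polynomials of $F_G$ and of its inside-out structure genuinely agree with those of the original cube-based construction. The excerpt asserts in Section~\ref{sec:flow-and-tension} that $(F_G,\H_F)$ and $(T_G,\H_T)$ are lattice transforms with $L_{(F_G,\H_F)}=\Flow_G$ and $L_{(T_G,\H_T)}=\Tension_G$, so I would cite that construction directly rather than re-derive it; the remaining work is simply to invoke Theorem~\ref{thm:iop-bounds} on each of the two inside-out polytopes and read off the conclusion, noting that unlike the modular case one cannot replace $L_{F_G}$ or $L_{T_G}$ by a graph-independent closed form such as $(k+1)^d$, which is why the statement is phrased in terms of these polytope-dependent Ehrhart polynomials.
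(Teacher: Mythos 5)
Your proposal is correct and follows essentially the same route as the paper: the paper's proof is exactly this two-line reduction, citing the Section~\ref{sec:flow-and-tension} realizations of $\Flow_G$ and $\Tension_G$ as Ehrhart polynomials of the integral, compressed inside-out polytopes $(F_G,\H_F)$ and $(T_G,\H_T)$ and then invoking Theorem~\ref{thm:iop-bounds}. Your extra care about the lattice-transform step (that it preserves integrality, compressedness, and Ehrhart polynomials) is a sound elaboration of a point the paper treats as implicit.
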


\begin{proof}
As we have seen in Section~\ref{sec:flow-and-tension}, the integral flow or tension polynomial of any graph can be realized as the Ehrhart polynomial of an integral compressed inside-out polytope $(F_G,\H)$ or $(T_G,\H)$, respectively. The claim then follows from Theorem~\ref{thm:iop-bounds}.
\end{proof}

To check whether a given $p$ satisfies this necessary condition, say in the flow case, we would have to check whether $L_{F_G}(k)-p(k)$ satisfies the $g$-constraints for all graphs $G$ with $\Flow_G=p$. So the question arises which polynomials are of the form $L_{F_G}$ or $L_{T_G}$ for some graph $G$: Which polynomials are Ehrhart polynomials of integral flow or tension polytopes? We address this question in this section by giving constraints on the $h^*$-vectors of these polynomials.

First we exploit some nice geometric properties of integral flow and tension polytopes to show that their $h^*$-vectors are palindromic.

\begin{definition}
	A lattice polytope $P$ is  \textbf{reflexive} if 
		\begin{enumerate}
			\item $\inter \ P\cap\Z^d=\{\mathbf{0}\}$;  
			\item $\inter \ ((k+1)P)\cap\Z^d=kP\cap\Z^d$ for all  $k\in\Z_{>0}$.
		\end{enumerate}
\end{definition}

The following proposition gives a method for obtaining new reflexive polytopes from a given reflexive polytope. 

\begin{prop}
 \label{reflex}
 Let $P$ be a reflexive $d$-polytope and let $S$ be a linear subspace of $\R^d$. 
 If $Q:=P \cap S$ is a lattice polytope, then $Q$ is reflexive.
\end{prop}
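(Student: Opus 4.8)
The plan is to verify the two defining conditions of reflexivity for $Q$ directly, reducing each to the corresponding condition already known for $P$. Two elementary facts drive everything. First, since $S$ is a \emph{linear} subspace, dilation commutes with intersection: for any $c>0$ we have $c\cdot Q = c(P\cap S) = (cP)\cap(cS) = (cP)\cap S$, because $cS=S$. Second, since $P$ is reflexive its unique interior lattice point is $\mathbf{0}$, and as $S$ is linear we have $\mathbf{0}\in S$; hence $\mathbf{0}\in\inter P\cap S$, so $S$ meets the interior of $P$ and, by scaling, of every dilate $cP$ (because $\mathbf{0}\in\inter(cP)$). I would then invoke the standard convex-geometric identity that when a flat $S$ meets the interior of a convex body $K$, the relative interior of $K\cap S$ taken within $S$ equals $(\inter K)\cap S$. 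Applied to $K=cP$ this yields $\mathrm{relint}(cQ)=(\inter(cP))\cap S$ for every $c>0$, where throughout I read the ``interior'' in the definition of reflexivity as the relative interior of $Q$ inside $S=\mathrm{aff}(Q)$, which is the only sensible reading for the lower-dimensional polytope $Q$.

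With these in hand, condition (1) is immediate:
\[
\mathrm{relint}(Q)\cap\Z^d = (\inter P)\cap S\cap\Z^d = \big((\inter P)\cap\Z^d\big)\cap S = \{\mathbf{0}\}\cap S = \{\mathbf{0}\},
\]
the last equality holding because $\mathbf{0}\in S$, and the penultimate one because $P$ satisfies condition (1).

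For condition (2) I would chain the same identities. Fixing $k\in\Z_{>0}$, I first use the relative-interior identity and then condition (2) for $P$:
\[
\mathrm{relint}((k+1)Q)\cap\Z^d = (\inter((k+1)P))\cap S\cap\Z^d = \big((\inter((k+1)P))\cap\Z^d\big)\cap S = (kP\cap\Z^d)\cap S,
\]
and finally pushing the intersection with $S$ back through the dilation gives $(kP\cap S)\cap\Z^d = kQ\cap\Z^d$, using $kP\cap S = kQ$. This is exactly condition (2) for $Q$, so $Q$ is reflexive.

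The only genuinely non-formal ingredient is the relative-interior identity $\mathrm{relint}(K\cap S)=(\inter K)\cap S$ under the hypothesis $S\cap\inter K\neq\emptyset$, and I expect this to be the one point that must be stated carefully; it is precisely here that the assumption $\mathbf{0}\in\inter P$ (equivalently, that $S$ passes through the interior of each dilate) is essential, and it is also what forces the interpretation of interior as relative interior within $S$. Everything else is bookkeeping: pushing the intersections with $S$ and with $\Z^d$ through the scalings, which is valid exactly because $S$ is a linear subspace containing $\mathbf{0}$. The hypothesis that $Q$ is a lattice polytope plays no role beyond guaranteeing that $Q$ is an honest lattice polytope, so that the notion of reflexivity even applies to it.
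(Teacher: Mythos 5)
Your proof is correct and takes essentially the same approach as the paper's: both reduce reflexivity of $Q$ to that of $P$ using that $S$ is linear, so that $cQ=(cP)\cap S$ for all $c>0$ and $\mathbf{0}\in S\cap\inter P$. If anything yours is more complete, since the paper argues only the inclusion $\mathrm{relint}((k+1)Q)\cap\Z^d\subseteq kQ$ by contradiction and leaves the reverse inclusion and the relative-interior identity $\mathrm{relint}(K\cap S)=(\inter K)\cap S$ implicit, whereas you state and use both explicitly.
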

 
 \begin{proof}
Since $S$ a subspace, $Q^\circ\cap\Z^d=\{\mathbf{0}\}$. 
 Suppose $\x \in \Z^d \cap (k+1) Q^\circ \setminus kQ$ 
 for some $k \in \Z_{>0}$, then $\x$ is also in $(k+1)P^{\circ} \setminus tP$.
This contradicts the fact that $P$ is reflexive.
 \end{proof}

Taking $P = [-1,1]^{m}$ (where $m = |E|$) and letting $S$ be the flow space (tension space, respectively) in Proposition~\ref{reflex} yields

\begin{cor}
\label{Gorenstein}
Integral flow and tension polytopes of a finite graph $G$ are reflexive (and hence Gorenstein).
\end{cor}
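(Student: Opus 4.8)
The plan is to apply Proposition~\ref{reflex} twice --- once with the flow space and once with the tension space as the subspace $S$ --- and then to transport the conclusion from the intersection $[-1,1]^E\cap S$ back to the polytopes $F_G$ and $T_G$ through a lattice isomorphism. Write $m=|E|$ and $P=[-1,1]^m\subset\R^E$. The first thing I would check is that $P$ itself is reflexive. Condition (1) of the definition is immediate, since $\inter P=(-1,1)^m$ contains no lattice point other than $\mathbf{0}$. For condition (2) I would note that $\inter((k+1)P)=(-(k+1),k+1)^m$, whose lattice points are exactly $\{-k,\dots,k\}^m=kP\cap\Z^m$, so that $\inter((k+1)P)\cap\Z^m=kP\cap\Z^m$ for every $k\in\Z_{>0}$. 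Hence $P$ is a reflexive $m$-polytope.

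Next I would take $S=\ker A$ in the flow case and $S=\ker M$ in the tension case; each is a linear subspace of $\R^E$, so the subspace hypothesis of Proposition~\ref{reflex} is met, and the only remaining hypothesis is that $Q:=P\cap S$ be a lattice polytope. In the flow case $Q=\{x\in\R^E\mid Ax=0,\ -1\le x\le 1\}$, and stacking $A$ with $\pm I$ produces a totally unimodular constraint matrix, because $A$ is totally unimodular and appending unit-vector rows preserves total unimodularity; since the right-hand side is integral, every vertex of $Q$ is a lattice point. The tension case is identical with $M$ in place of $A$. (Both integrality statements were already recorded in Section~\ref{sec:flow-and-tension}.) Proposition~\ref{reflex} then gives that $Q$ is reflexive, with respect to the induced lattice $\Z^E\cap S$, in each case.

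Finally I would identify $Q$ with the flow, resp.\ tension, polytope. Projection onto the non-tree coordinates, $x\mapsto x|_{E\setminus T}$, restricts to a bijection of $\ker A$ onto $\R^{E\setminus T}$ whose inverse is $g\mapsto(g,Cg)$; as $C$ is an integer matrix, this carries $\Z^E\cap\ker A$ bijectively onto $\Z^{E\setminus T}$ and sends $Q=\ker A\cap[-1,1]^E$ onto $F_G$, so it is a lattice isomorphism. The analogous projection onto the tree coordinates carries $\ker M\cap[-1,1]^E$ onto $T_G$. Since reflexivity is invariant under lattice isomorphism, $F_G$ and $T_G$ are reflexive; the parenthetical ``hence Gorenstein'' is the standard fact that every reflexive polytope is Gorenstein (being Gorenstein of index one, with palindromic $h^*$-vector), see e.g.\ \cite{BrunsGubeladze09}.

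The only step that genuinely requires care is the lattice-polytope hypothesis in the second paragraph: intersecting the cube with an arbitrary subspace can easily create non-integral vertices, and it is precisely the total unimodularity of the incidence matrices $A$ and $M$ that rules this out. The verification that the cube is reflexive and the identification of $Q$ with $F_G$ and $T_G$ are then routine.
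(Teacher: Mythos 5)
Your proof is correct and follows essentially the same route as the paper: the paper's proof consists precisely of applying Proposition~\ref{reflex} with $P=[-1,1]^{|E|}$ and $S$ the flow (resp.\ tension) space. The details you supply --- reflexivity of the cube, integrality of $Q$ via total unimodularity of $A$ and $M$, and the unimodular projection identifying $Q$ with $F_G$ resp.\ $T_G$ --- are exactly the verifications the paper leaves implicit (the integrality and lattice-transform facts are recorded in Section~\ref{sec:flow-and-tension}), so no new approach is involved.
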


The following theorem, due to Hibi, gives a beautiful connection between the geometry of reflexive polytopes on the one hand and palindromic $h^*$-vectors on the other.

\begin{thm}[Hibi \cite{Hibi92}]
\label{palindrom}
If $P$ is a lattice $d$-polytope with the origin in its interior, then 
 $h^{*}(P)$ is palindromic, i.e., it satisfies $h^{*}_{i} = h^{*}_{d-i}$ for $0 \le i \le \lfloor \frac{d}{2} \rfloor$,
 if and only if
$P$ is reflexive.
\end{thm}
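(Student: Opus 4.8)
The plan is to use Ehrhart--Macdonald reciprocity to relate the lattice-point count of the interior of $P$ to that of $P$ itself, and then to observe that both reflexivity and palindromicity are, via a short generating-function computation, equivalent to one and the same numerical identity.

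First I would recall the rational generating function encoding the $h^*$-vector. Writing $\Ehr_P(z) = \sum_{k \ge 0} L_P(k) z^k = \frac{\sum_{i=0}^d h^*_i z^i}{(1-z)^{d+1}}$ and invoking Ehrhart--Macdonald reciprocity $L_{P^\circ}(k) = (-1)^d L_P(-k)$ for $k \in \Z_{>0}$ (see \cite{BeckRobins07}), one obtains the companion identity $\sum_{k \ge 1} L_{P^\circ}(k) z^k = (-1)^{d+1}\Ehr_P(1/z) = \frac{\sum_{i=0}^d h^*_i z^{d+1-i}}{(1-z)^{d+1}}$. Dividing by $z$ gives $\sum_{k \ge 0} L_{P^\circ}(k+1) z^k = \frac{\sum_{i=0}^d h^*_i z^{d-i}}{(1-z)^{d+1}}$. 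Comparing this with $\Ehr_P(z)$ and clearing the common denominator $(1-z)^{d+1}$, the polynomial identity $\sum_{i=0}^d h^*_i z^{d-i} = \sum_{i=0}^d h^*_i z^i$ holds if and only if $L_{P^\circ}(k+1) = L_P(k)$ for every $k \ge 0$; and this polynomial identity is precisely the palindromicity $h^*_i = h^*_{d-i}$. Thus the theorem reduces to showing that reflexivity is equivalent to the numerical condition $L_{P^\circ}(k+1) = L_P(k)$ for all $k \ge 0$.

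For the forward direction this is immediate: condition (1) of reflexivity gives $L_{P^\circ}(1) = |\{\mathbf{0}\}| = 1 = L_P(0)$, and condition (2) gives $L_{P^\circ}(k+1) = L_P(k)$ for $k \ge 1$, so the numerical identity holds and palindromicity follows. For the converse I would use the hypothesis that the origin lies in $\inter P$: for every $k \ge 0$ and $y \in P$ the point $\tfrac{k}{k+1}\, y = \tfrac{k}{k+1}\, y + \tfrac{1}{k+1}\cdot \mathbf{0}$ is a convex combination with positive weight on the interior point $\mathbf{0}$, hence lies in $\inter P$. Scaling by $k+1$ shows $kP \subseteq \inter((k+1)P)$, and therefore $kP \cap \Z^d \subseteq \inter((k+1)P) \cap \Z^d$.

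The step I expect to be the main (if modest) obstacle is upgrading the numerical equality to the set equality demanded by the definition of reflexive: the identity $L_{P^\circ}(k+1) = L_P(k)$ only asserts that two finite lattice-point sets have equal cardinality, whereas condition (2) asserts that they are equal. The containment $kP \cap \Z^d \subseteq \inter((k+1)P) \cap \Z^d$ established above closes this gap, since an inclusion of finite sets of equal cardinality is an equality; this yields condition (2), and taking $k = 0$ recovers condition (1). This is exactly where the origin-in-interior hypothesis is essential to the reverse implication, and it completes the equivalence.
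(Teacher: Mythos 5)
Your proof is correct, and it is worth noting that the paper itself contains no proof to compare against: the statement is imported verbatim from Hibi \cite{Hibi92} as a quoted theorem. Your argument is the standard modern proof of Hibi's palindromic theorem via Ehrhart--Macdonald reciprocity, and every step checks out. The generating-function computation is right: $(-1)^{d+1}\Ehr_P(1/z)$ does equal $\bigl(\sum_{i=0}^d h^*_i z^{d+1-i}\bigr)/(1-z)^{d+1}$, and since both series share the denominator $(1-z)^{d+1}$, palindromicity of the numerator is equivalent to the coefficientwise identity $L_{P^\circ}(k+1)=L_P(k)$ for all $k\geq 0$ (with $L_P(0)=1$ supplying the $k=0$ case against condition (1)). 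Crucially, you correctly identified and closed the one genuinely delicate point: the numerical identity only equates cardinalities, while the paper's definition of reflexive demands the set equality $\inter((k+1)P)\cap\Z^d=kP\cap\Z^d$, and your containment $kP\subseteq\inter((k+1)P)$ --- obtained from the convex combination $\tfrac{k}{k+1}y+\tfrac{1}{k+1}\mathbf{0}$ with $\mathbf{0}\in\inter P$ --- upgrades equal cardinality of finite sets to equality, which is exactly where the origin-in-interior hypothesis is needed. Your proof also has the virtue of targeting precisely the lattice-point-theoretic definition of reflexivity used in Section 7 of the paper (rather than the more common definition via integrality of the polar dual, which Hibi's original treatment passes through), so it slots into the paper's framework without any translation lemma; the only hypothesis you use implicitly is that $P$ is full-dimensional, which is guaranteed since a $d$-polytope in $\R^d$ with the origin in its interior has nonempty interior, so reciprocity applies in the form you invoke.
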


Combining Corollary \ref{Gorenstein} and Theorem \ref{palindrom} yields
\begin{thm}
Let $G$ be any finite graph. Then $h^*({F_{G}})$ and $h^*({T_{G}})$ are palindromic. 
\end{thm}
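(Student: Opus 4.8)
The plan is to obtain the statement as an immediate consequence of the two preceding results: Corollary~\ref{Gorenstein}, which asserts that $F_G$ and $T_G$ are reflexive, and Theorem~\ref{palindrom} (Hibi), which equates reflexivity with palindromicity of the $h^*$-vector for a lattice polytope containing the origin in its interior. The only work is therefore to check that the hypotheses of Hibi's theorem are met by $F_G$ and $T_G$, after which the conclusion follows by quoting the two theorems in sequence.

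First I would confirm that both $F_G$ and $T_G$ are lattice polytopes. This was already recorded in Section~\ref{sec:flow-and-tension}: the inside-out polytopes $(F_G,\HHH_F)$ and $(T_G,\HHH_T)$ are lattice transforms of the integral flow and tension inside-out polytopes, whose integrality follows from the total unimodularity of $C$. Next I would verify that each polytope contains the origin in its (full-dimensional) interior. Both are cut out by constraints of the form $-1 \le \cdot \le 1$ that are symmetric about $\mathbf{0}$; evaluating the defining inequalities at the origin gives $-1 < 0 < 1$ in each coordinate, so $\mathbf{0}$ satisfies every constraint strictly. Since the constraints $-1\le f\le 1$ (resp.\ $-1\le t\le 1$) alone already define a full-dimensional cube, and the remaining constraints $-1\le -Cf\le 1$ (resp.\ $-1\le C^t t\le 1$) are inactive near the origin, the polytopes $F_G\subset\R^{E\setminus T}$ and $T_G\subset\R^{T}$ contain a full-dimensional neighborhood of $\mathbf{0}$. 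Hence the origin lies in the genuine interior, as required by Theorem~\ref{palindrom}.

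With these hypotheses verified, I would finish by invoking Corollary~\ref{Gorenstein} to conclude that $F_G$ and $T_G$ are reflexive, and then Theorem~\ref{palindrom} to conclude that $h^*(F_G)$ and $h^*(T_G)$ are palindromic. There is no genuine obstacle here, as the result is a direct corollary of the two theorems established immediately above; the only point requiring care is the interiority hypothesis of Hibi's theorem, which could fail if one worked with a lower-dimensional embedding of the polytope, but does not here because $F_G$ and $T_G$ are full-dimensional in $\R^{E\setminus T}$ and $\R^{T}$, respectively.
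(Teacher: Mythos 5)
Your proposal is correct and follows exactly the paper's route: the paper proves this theorem in one line by combining Corollary~\ref{Gorenstein} (reflexivity of $F_G$ and $T_G$) with Theorem~\ref{palindrom} (Hibi's characterization). Your additional verification that the hypotheses of Hibi's theorem hold --- that the polytopes are lattice polytopes and contain $\mathbf{0}$ in their full-dimensional interior, using the representations in $\R^{E\setminus T}$ and $\R^{T}$ rather than the lower-dimensional embeddings from Proposition~\ref{reflex} --- is a sound piece of care that the paper leaves implicit.
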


Our next goal is to produce vectors $f_{l},f_{u},t_{l},$ and $t_{u}$ such that the $h^*$-vector of any flow polytope (respectively, tension polytope)  satisfies $f_{l} \le h^* \le f_{u}$ (resp. $t_{l} \le h^* \le t_{u}$). To this end we use Stanley's Monotonicity Theorem (for polytopes):

\begin{thm}[Stanley \cite{Stanley1993}]
\label{thm:monotonicity}
Let $\P \subseteq \mathcal{Q}$ be lattice $d$-polytopes. Then $h^*(P) \le h^*(Q)$.
\end{thm}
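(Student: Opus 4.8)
The plan is to pass from the polytopes to their associated Cohen--Macaulay semigroup rings and to read off the two $h^*$-vectors as numerators of Hilbert series, thereby reducing the componentwise inequality $h^*(P)\le h^*(Q)$ to the assertion that a single difference of Hilbert series has a nonnegative numerator. Writing $d$ for the common dimension, I would lift $P$ and $Q$ to height $1$ in $\R^{d+1}$ and form the cones over them, so that $\mathrm{cone}(P)\subseteq\mathrm{cone}(Q)$, and set $A=k[\mathrm{cone}(P)\cap\Z^{d+1}]$ and $B=k[\mathrm{cone}(Q)\cap\Z^{d+1}]$, each graded by the height (last) coordinate. Then $\dim_k A_k=L_P(k)$ and $\dim_k B_k=L_Q(k)$, whence
\[
  \sum_{k\ge 0} L_P(k)\,z^k=\frac{\sum_i h^*_i(P)\,z^i}{(1-z)^{d+1}}=\mathrm{Hilb}(A)(z),
\]
and similarly for $Q$ and $B$. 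Both semigroups are saturated, so $A$ and $B$ are normal and hence Cohen--Macaulay of Krull dimension $d+1$ by Hochster's theorem, and the inclusion of cones induced by $P\subseteq Q$ gives a degree-preserving inclusion $A\hookrightarrow B$. The theorem is thus equivalent to the statement that the numerator of $\mathrm{Hilb}(B)(z)-\mathrm{Hilb}(A)(z)$ has nonnegative coefficients.

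The mechanism I would try to use to extract such nonnegativity is reduction modulo a linear system of parameters. If $\theta_1,\dots,\theta_{d+1}$ are degree-$1$ elements forming a homogeneous system of parameters for a graded Cohen--Macaulay algebra $R$, then $R$ is free over $k[\theta_1,\dots,\theta_{d+1}]$, and consequently $h^*_i=\dim_k(R/(\theta)R)_i$, \emph{regardless} of whether $R$ is generated in degree one. In the favourable situation where one such sequence can be chosen inside $A$ and is simultaneously a parameter system for $B$, the inclusion $A\hookrightarrow B$ induces a graded map $A/(\theta)A\to B/(\theta)B$, and the desired inequality would drop out of a degreewise comparison of these finite-dimensional quotients.

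The hard part --- and the reason this is a genuine theorem rather than a formality --- is that this favourable situation fails in general: $B$ need not be a finitely generated module over $A$, so there is no common system of parameters. The smallest example is $P=[0,1]\subseteq[0,2]=Q$, where $A=k[s,st]$ sits inside $B=k[s,st,st^2]$ and $B$ is infinite over $A$, since the powers $(st^2)^k=s^k t^{2k}$ are linearly independent modulo $A$; equivalently, every linear form coming from $A$ vanishes at the torus-fixed point of $\mathrm{Spec}\,B$ lying over the vertex $2\notin P$. Overcoming this degeneracy is the crux of the argument, and it is where the Cohen--Macaulay hypothesis must really be used.

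Accordingly, I would not push the comparison through a shared parameter ideal. Instead I would invoke Stanley's commutative-algebra engine, which controls the numerator of $\mathrm{Hilb}(B)-\mathrm{Hilb}(A)$ directly from the Cohen--Macaulay property of $A$ and $B$. It is worth noting that the two rings share a fraction field --- both are full-dimensional normal semigroup rings generating the same group $\Z^{d+1}$ --- so the inclusion $A\hookrightarrow B$ is a birational map of normal, equidimensional, Cohen--Macaulay graded domains, which is precisely the setting his argument exploits. This algebraic step is the substance of the proof; it is what is recorded in \cite{Stanley1993}, and it is what we take as a black box in the applications above.
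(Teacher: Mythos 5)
You should first know that the paper contains no proof of this statement to compare against: Theorem~\ref{thm:monotonicity} is quoted from Stanley \cite{Stanley1993} and used as a black box. Judged as a standalone proof, your write-up has a genuine gap, although the parts you do prove are correct. The translation to commutative algebra is standard and accurately carried out: coning $P$ and $Q$ at height one gives graded semigroup rings $A \subseteq B$ whose Hilbert functions are $L_P$ and $L_Q$, both normal (the semigroups are intersections of rational cones with the full lattice) and hence Cohen--Macaulay of dimension $d+1$ by Hochster, with $h^*(P)$ and $h^*(Q)$ appearing as the numerators of the Hilbert series exactly as in the paper's definition of the $h^*$-vector. Your counterexample is also correct and well chosen: for $[0,1]\subseteq[0,2]$ one has $A=k[s,st]\subseteq B=k[s,st,st^2]$, and since $A$ is normal with $\mathrm{Frac}(A)=\mathrm{Frac}(B)$ and $(1,2)\notin\mathrm{cone}([0,1])$, the element $st^2$ is not integral over $A$, so $B$ is not a finite $A$-module and no linear system of parameters for $B$ can be found inside $A$. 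This correctly demolishes the naive plan of comparing $A/(\theta)A$ with $B/(\theta)B$ degreewise.

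The gap is that, having demolished the naive plan, you put nothing in its place. Your final paragraph invokes ``Stanley's commutative-algebra engine'' for precisely the claim at issue: that for a graded, birational inclusion of Cohen--Macaulay domains of equal dimension, the numerator of $\mathrm{Hilb}(B)-\mathrm{Hilb}(A)$ is componentwise nonnegative. Modulo the routine reductions you performed, that claim \emph{is} the theorem; deferring it to \cite{Stanley1993} makes the argument circular as a proof, even though it is a perfectly fair justification for the citation (and in that sense matches what the paper itself does). A complete proof must actually establish the inequality by some mechanism --- Stanley's original algebraic argument, or one of the later purely combinatorial proofs via half-open decompositions of triangulations --- and no such mechanism, nor even a sketch of one, appears in your proposal. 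In short: correct setup, correct identification of the obstruction, but the mathematical core of the theorem is assumed rather than proved.
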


To apply this result we use the variants of flow and tension polytopes from Section~\ref{sec:flow-and-tension} that were defined with respect to a fixed spanning forest.

\begin{lem}
\label{alcoved}
Let $G=(V,E)$ be a finite graph. Then any tension polytope $T_G$ of $G$ is a subpolytope of $[-1,1]^{n-1}$. 
Moreover, if $H \subseteq G$ is a subgraph of $G$, then $T_{G}$ is a subpolytope of $T_{H}$ for a suitable choice of spanning forest.
\end{lem}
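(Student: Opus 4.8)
The plan is to treat the two assertions separately, the first being essentially definitional and the second resting on the stability of the fundamental-cycle data under passage to a subgraph. Throughout I use the description $T_G=\{t\in \R^T \mid -1\leq C^t t \leq 1,\ -1\leq t \leq 1\}$ from Section~\ref{sec:flow-and-tension}, where $T$ is the chosen spanning forest and the rows of $-C^t$ are the vectors in $\BBB$.

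For the first statement I would simply read off the box constraint. By definition every $t\in T_G$ satisfies $-1\le t\le 1$, i.e.\ $t\in[-1,1]^T$. When $G$ is connected a spanning forest is a spanning tree with $|T|=n-1$ edges, so $[-1,1]^T=[-1,1]^{n-1}$ and hence $T_G\subseteq[-1,1]^{n-1}$. (For disconnected $G$ the same argument gives $T_G\subseteq[-1,1]^{|T|}$, with $|T|=|V|$ minus the number of components.) No further work is needed here; the role of this inclusion is to serve as the upper comparand when Stanley's Monotonicity Theorem~\ref{thm:monotonicity} is applied.

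For the second statement the key idea is to force $T_G$ and $T_H$ into a common ambient space $\R^T$ by using one spanning forest that serves both graphs; this is precisely the ``suitable choice'' in the statement. Concretely I would choose a spanning forest $T\subseteq E_H$ of $H$ that is simultaneously a spanning forest of $G$. The decisive observation is then that the fundamental-cycle data is inherited: for a non-tree edge $e\in E_H\setminus T$, the unique path in $T$ joining the endpoints of $e$ uses only edges of $T\subseteq E_H$, so the sign vector $\sigma^e|_T$ --- and hence the corresponding column of $C$ and row of $-C^t$ --- is the same whether computed in $H$ or in $G$. Consequently every linear constraint $-1\le\sprod{b}{t}\le 1$ coming from $H$ occurs verbatim among the constraints defining $T_G$, which merely imposes in addition one such slab for each edge of $E_G\setminus E_H$. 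Writing $T_G=T_H\cap\bigcap_{e\in E_G\setminus E_H}\{\,t : -1\le\sprod{b_e}{t}\le1\,\}$ exhibits $T_G$ as a subpolytope of $T_H$; since $C$ is totally unimodular these extra slabs are bounded by integer translates of $0,\pm1$-hyperplanes, so the slicing keeps $T_G$ an alcoved lattice subpolytope, which is exactly what the monotonicity theorem requires.

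The main obstacle, and the point that requires care, is precisely the choice of this common spanning forest: the inclusion $T_G\subseteq T_H$ can only hold when $\dim T_G\le\dim T_H$, i.e.\ when $|T|$ is the same for both graphs, and this forces $H$ to span the same vertices and to induce the same connected components as $G$ (so that a spanning forest of $G$ can be found inside $H$). If $H$ were allowed to omit vertices of $G$ or to split a component, then $\dim T_H<\dim T_G$ and no subpolytope relation could hold; thus the hypothesis should be read with $H$ a connected spanning subgraph, which is the situation arising in the application. Once the common forest is fixed the verification that the relevant columns of $C$ agree is routine, and the containment is then immediate. This is the form in which the lemma feeds the monotonicity argument, sandwiching $h^*(T_G)$ between the value for a spanning tree (where $T_H=[-1,1]^{n-1}$) and that for a maximal supergraph such as $K_n$.
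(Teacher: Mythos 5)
Your proof is correct and follows essentially the same route as the paper's: the first inclusion is read directly off the box constraints $-1 \le t \le 1$ in the definition of $T_G$, and the second is obtained by constructing $T_G$ and $T_H$ with respect to a common spanning forest, so that the inequalities defining $T_H$ form a subset of those defining $T_G$. Your explicit remark that this forest must simultaneously be a spanning forest of $G$ --- forcing $V_H = V_G$ and matching component structure, as in the application to $T_{K_n} \subseteq T_G$ --- makes precise a hypothesis the paper's proof leaves implicit, which is a worthwhile clarification.
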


\begin{proof}
By the construction given in Section~\ref{sec:flow-and-tension}, $T_G\subset [-1,1]^n$.

Now, let $H$ be a subgraph of $G$ such that $V_{H} = V_{G}$. Let $T$ be a spanning forest of $H$ and let the tension polytopes $T_G$ and $T_H$ both be constructed with respect to $T$. As $E_G\setminus T\supset E_H\setminus T$, the set of inequalities defining $T_G$ is a superset of the set of inequalities defining $T_H$. Thus $T_G \subset T_H$.
\end{proof}

We can now give upper and lower bounds on $L_{T_G}$.

\begin{thm}
\label{thm:integral-tension-bounds}
Let $G$ be a connected finite graph with $n$ vertices. Then the $h^*$-vector of $T_G$ satisfies
$$
A(n,i+1) = h_i^*((k+1)^n-k^n) \le h_i^{\star}(T_{G}) \le h_i^*((2k+1)^{n-1}) = B(n,i+1)
$$
for $0\leq i \leq n-1$ and these bounds are tight for all $n$.
\end{thm}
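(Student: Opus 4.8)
The plan is to sandwich $T_G$ between two explicit lattice polytopes whose Ehrhart polynomials can be identified with $(k+1)^n-k^n$ and $(2k+1)^{n-1}$, and then to invoke Stanley's Monotonicity Theorem (Theorem~\ref{thm:monotonicity}). Concretely, I would first show that for every connected graph $G$ on $n$ vertices there is a spanning tree with respect to which
\[
  T_{K_n} \subseteq T_G \subseteq [-1,1]^{n-1},
\]
where all three are full-dimensional lattice polytopes of dimension $n-1$ (full-dimensionality holds because a tension polytope of a connected graph contains a neighbourhood of the origin). The right-hand inclusion is exactly the first assertion of Lemma~\ref{alcoved}. For the left-hand inclusion I would apply the second assertion of Lemma~\ref{alcoved} with $G$ in the role of $H$ and $K_n$ in the role of the ambient graph: since $G$ is a subgraph of $K_n$ (passing if necessary to the underlying simple graph, which leaves $T_G$ unchanged because parallel edges only duplicate tension inequalities), a common spanning tree gives $T_{K_n}\subseteq T_G$. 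Theorem~\ref{thm:monotonicity} then yields the coordinatewise inequalities $h^*(T_{K_n}) \le h^*(T_G) \le h^*([-1,1]^{n-1})$.

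The second task is to identify the two outer $h^*$-vectors. The upper bound is immediate: $L_{[-1,1]^{n-1}}(k)=(2k+1)^{n-1}$, so identity~(\ref{eqn:macmahon}) (with $n$ replaced by $n-1$) gives $h_i^*([-1,1]^{n-1}) = B(n,i+1)$. For the lower bound the key step is the Ehrhart computation for $K_n$. A lattice point of $k\,T_{K_n}$ is precisely an integral tension of $K_n$ all of whose edge values lie in $[-k,k]$. Since the tensions of a connected graph are exactly the coboundaries $t_{uv}=p_v-p_u$ of integral vertex potentials $p$ taken modulo a global additive constant, and since completeness of $K_n$ makes the bound $|t_e|\le k$ on every edge equivalent to $\max p-\min p\le k$, these tensions are in bijection with potentials $p\colon\{1,\dots,n\}\to\{0,1,\dots,k\}$ that attain the value $0$ (normalising $\min p=0$). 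Counting by inclusion--exclusion on whether $0$ is attained gives $L_{T_{K_n}}(k)=(k+1)^n-k^n$.

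It then remains to convert $(k+1)^n-k^n$ into its $h^*$-vector. Viewing it as the sum of the degree-$n$ polynomials $(k+1)^n$ and $-k^n$, whose sum has degree $n-1$, Lemma~\ref{lem:linearity}(2) applies and, together with the Eulerian identities $h_j^*((k+1)^n)=A(n,j+1)$ and $h_j^*(k^n)=A(n,j)$ from~(\ref{eqn:eulerian-1}) and~(\ref{eqn:eulerian-2}), yields
\[
  h_i^*\big((k+1)^n-k^n\big)=\sum_{j=0}^i\big(A(n,j+1)-A(n,j)\big)=A(n,i+1),
\]
the sum telescoping since $A(n,0)=0$. Combining the three computations gives $A(n,i+1)\le h_i^*(T_G)\le B(n,i+1)$ for $0\le i\le n-1$. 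Tightness for every $n$ is then witnessed by the extremal graphs: a spanning tree has no non-tree edges, so $T_G=[-1,1]^{n-1}$ and the upper bound is attained, while $G=K_n$ attains the lower bound by the computation above.

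Finally, I expect the Ehrhart computation $L_{T_{K_n}}(k)=(k+1)^n-k^n$ to be the main obstacle: one must translate lattice points of $k\,T_{K_n}$ into bounded integral tensions, then into vertex potentials, and handle the quotient by constant potentials carefully so that the normalisation $\min p=0$ produces a genuine bijection. By contrast, the two inclusions together with Stanley monotonicity, the MacMahon identity, and the telescoping $h^*$-calculation are all routine once the lemmas recalled above are in hand.
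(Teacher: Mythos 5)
Your proposal is correct and follows essentially the same route as the paper: the same sandwich $T_{K_n}\subseteq T_G\subseteq[-1,1]^{n-1}$ obtained from the two assertions of Lemma~\ref{alcoved}, Stanley's Monotonicity Theorem~\ref{thm:monotonicity}, the identical potential-function count giving $L_{T_{K_n}}(k)=(k+1)^n-k^n$ (the paper's set $C$ of vertex functions with $\min c=0$ is exactly your normalised potentials), and the same telescoping application of Lemma~\ref{lem:linearity}(2) with the Eulerian identities (\ref{eqn:eulerian-1}) and (\ref{eqn:eulerian-2}), with tightness witnessed by trees and $K_n$ just as in the paper. Your added remarks on parallel edges and full-dimensionality are minor refinements of details the paper leaves implicit.
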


Note that if $G$ has $c$ components $G_1,\ldots,G_c$, then $L_{T_G}=\prod_{i=1}^c L_{T_{G_i}}$.

\begin{proof}
By Lemma~\ref{alcoved} we know that $T_G\subset [-1,1]^{n-1}$ and by Theorem~\ref{thm:monotonicity} and equation (\ref{eqn:macmahon}) we conclude that $h^*_i(T_G) \leq h_i^{\star}(L_{[-1,1]^{n-1}}) = B(n,i+1)$ for all $0\leq i \leq n-1$.
This bound is realized as $[-1,1]^{n-1}$ is the tension polytope of any tree on $n$ vertices.

By Theorem \ref{alcoved}, we have that, for any spanning forest $T$ of $G$, $T_{K_{n}} \subseteq T_{G}$.
Thus to obtain the lower bound we must show that $h^*(L_{T_{K_{n}}}) = A(n,i+1)$.

First we show that $L_{T_{K_{n}}} = (k+1)^{n} - k^{n}$.
To see this note that $L_{T_{K_{n}}}$ counts the number of $(k+1)$-tensions on $K_{n}$.
The $(k+1)$-tensions of $K_{n}$ are in bijection with the functions $c$ in the set
\[
C = \{ c:V\rar \Z \mid \left|c(v_{i}) - c({v_{j}})\right| \le k, \min_{v \in V} c(v) = 0\}.
\]
Because we are dealing with the complete graph the functions $c\in C$ take only values in $\{0,\ldots,k\}$ and for functions $c: V \to \{0,1,\dots, k\}$ the condition $\left|c(v_{i}) - c(v_{j})\right| \le k$ is automatically satisfied. So we can write $C$ as
\begin{align*}
C &= \{c: V \to \{0,1,\dots, k\} \mid \min_{v \in V} c(v) = 0\} \\
 &= \{c: V \to \{0,1,\dots, k\}\} \setminus \{c: V \to \{1,\dots, k\}\}
\end{align*}
and thus
\[
L_{T_{K_n}}(k) = |C| = (k+1)^n-k^n.
\]
Combining Lemma~\ref{lem:linearity} with (\ref{eqn:eulerian-1}) and (\ref{eqn:eulerian-2}) yields $h_i^*((k+1)^n-k^n)=A(n,i+1)$ for all $0\leq i\leq n-1$.
\end{proof}

We now turn our attention to integral flow polytopes. 
If $G$ is a planar graph and $G^{\star}$ is its dual, then, by the (vector space) duality of the flow and tension spaces, the flow polytope of $G$ is the tension polytope of $G^{\star}$. 
In this case, we can apply the theorem above to obtain bounds on the flow polynomial of $G$.

In the non-planar case we proceed as follows.  Let $\diamondsuit_{d}$ denote the $d$-dimensional cross-polytope defined by $$\diamondsuit_{d} = \left\{ \mathbf{x}_{i} \in \R^{d} \mid \sum_{i=1}^{d} |x_{i}| \le 1\right\}.$$

\begin{lem}
\label{alcoved-flow}
Let $G=(V,E)$ be a finite graph. Then for any flow polytope $F_G$ of $G$ we have that $\diamondsuit_{d}$ is a subpolytope of $F_G$  and $F_G$ is a subpolytope of $[-1,1]^{n-1}$ for any choice of spanning forest.
\end{lem}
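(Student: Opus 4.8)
The plan is to read off both inclusions directly from the defining inequalities of the flow polytope
\[
F_G=\{f\in \R^{E\setminus T} \mid -1\leq -C f \leq 1,\ -1\leq f \leq 1\},
\]
whose ambient dimension is $d=|E\setminus T|$, so that $\diamondsuit_d$ and $F_G$ indeed live in the same space $\R^{E\setminus T}$ and the cube to be used is $[-1,1]^d$. The outer inclusion is immediate: the constraint $-1\leq f\leq 1$ is part of the very definition of $F_G$, so every $f\in F_G$ already lies in $[-1,1]^{E\setminus T}=[-1,1]^d$, and nothing further is required.

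For the inner inclusion $\diamondsuit_d\subseteq F_G$, I would take an arbitrary $f\in\diamondsuit_d$, that is a vector with $\sum_{e\in E\setminus T}|f_e|\leq 1$, and verify that it satisfies both families of inequalities defining $F_G$. The box constraints hold because $|f_e|\leq\sum_{e'}|f_{e'}|\leq 1$ for every coordinate. For the constraints $-1\leq -Cf\leq 1$, equivalently $|(Cf)_j|\leq 1$ for each row $j$, the key point is that every entry of $C$ lies in $\{0,\pm1\}$, since by construction the columns of $C$ are the restrictions $\sigma^e|_T$ of path sign vectors. Row by row this gives, by the triangle inequality,
\[
|(Cf)_j|=\Bigl|\sum_{e\in E\setminus T}C_{je}f_e\Bigr|\leq\sum_{e\in E\setminus T}|C_{je}|\,|f_e|\leq\sum_{e\in E\setminus T}|f_e|\leq 1,
\]
so $f\in F_G$ and hence $\diamondsuit_d\subseteq F_G$.

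The only ingredient that is not pure bookkeeping is the estimate $|C_{je}|\leq 1$, i.e.\ that the entries of $C$ are sign-vector entries bounded by $1$ in absolute value; once this is in hand the rest is the triangle inequality applied coordinatewise. I therefore do not expect any genuine obstacle. Note also that the argument uses only that $C$ has $\{0,\pm1\}$ entries and makes no reference to which paths the columns encode, so both inclusions hold for every choice of spanning forest $T$, as claimed; the same reasoning applied to the rows of $-C^t$ yields the analogous containment for the tension polytope.
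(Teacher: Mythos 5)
Your proof is correct and takes essentially the same route as the paper: both inclusions rest on the fact that $C$ has $\{0,\pm 1\}$ entries, the paper checking only the vertices $\pm e_i$ of $\diamondsuit_d$ (with convexity of $F_G$ doing the rest) where you verify an arbitrary point of $\diamondsuit_d$ via the triangle inequality, which is just the pointwise unpacking of the same argument. Your version also usefully pins down the ambient dimension $d=|E\setminus T|$, which the statement of the lemma leaves imprecise.
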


\begin{proof}
By the construction given in Section~\ref{sec:flow-and-tension}, $F_G\subset [-1,1]^n$. Moreover, for any standard unit vector $e_i$ we have that $C e_i$ is a $\{0,\pm1\}$-vector. Thus $e_i$ and $-e_i$ are contained in $F_G$.
\end{proof}

As in the case of tensions, this implies upper and lower bounds on $h^*(L_{F_G})$.

\begin{thm}
\label{thm:integral-flow-bounds}
Let $G=(V,E)$ be a connected finite graph and let $r=|E|-|V|+1$. Then the $h^*$-vector of $F_G$ satisfies
$$
{r \choose i} \le h_i^{\star}(F_{G}) \le h_i^*((2k+1)^{r}) = B(r+1,i+1)
$$
for $0\leq i \leq n-1$. The upper bound is tight for all $r$.
\end{thm}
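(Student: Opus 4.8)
The plan is to mimic the proof of Theorem~\ref{thm:integral-tension-bounds} for the tension case, transporting each inequality through the containments established in Lemma~\ref{alcoved-flow} via Stanley's Monotonicity Theorem~\ref{thm:monotonicity}. For the upper bound, Lemma~\ref{alcoved-flow} gives $F_G\subset[-1,1]^{n-1}$, but this is the wrong ambient dimension for the stated bound $B(r+1,i+1)$, which is the MacMahon vector in dimension $r$. The key point is that $F_G$ lives inside the flow space $\ker A$, which has dimension exactly $r=|E|-|V|+1$ (the cycle rank of the connected graph $G$). So although $F_G\subset[-1,1]^{n-1}$ as a subset of $\R^{E\setminus T}$, the polytope $F_G$ is full-dimensional only in an $r$-dimensional sense; I would instead compare $F_G$ against the $r$-cube $[-1,1]^r$ under the coordinate identification $E\setminus T\cong\{1,\dots,r\}$ (since $|E\setminus T|=r$ when $T$ is a spanning tree). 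Then Monotonicity gives $h_i^*(F_G)\le h_i^*([-1,1]^r)=h_i^*((2k+1)^r)=B(r+1,i+1)$ by equation~(\ref{eqn:macmahon}). Tightness for all $r$ follows because $F_G=[-1,1]^r$ precisely when $C$ is a zero matrix, i.e.\ when $G$ is a bouquet of $r$ loops on a single vertex (or more generally a graph whose fundamental cycles are pairwise edge-disjoint and impose no binding constraints).

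For the lower bound, Lemma~\ref{alcoved-flow} gives $\diamondsuit_r\subset F_G$, so Monotonicity yields $h_i^*(\diamondsuit_r)\le h_i^*(F_G)$, and the task reduces to computing the $h^*$-vector of the $r$-dimensional cross-polytope. The Ehrhart polynomial of $\diamondsuit_r$ is the classical Delannoy-type count $L_{\diamondsuit_r}(k)=\sum_{j=0}^{r}2^j\binom{r}{j}\binom{k}{j}$, whose $h^*$-vector is known to be $h_i^*(\diamondsuit_r)=\binom{r}{i}$; I would verify this either by the standard generating-function identity $(1+z)^r/(1-z)^{r+1}=\sum_k L_{\diamondsuit_r}(k)z^k$ or by citing the well-known fact that the cross-polytope $h^*$-vector is the binomial row. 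This matches the claimed lower bound $\binom{r}{i}\le h_i^*(F_G)$.

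The main obstacle I anticipate is purely bookkeeping around dimension: the statement indexes the range as $0\le i\le n-1$ while the natural degree of $L_{F_G}$ is $r=|E|-|V|+1$, not $n-1$, so the range in the theorem as written is likely a typo inherited from the tension case and should read $0\le i\le r$. I would state the bounds for $0\le i\le r$ and note that $h_i^*(F_G)=0$ outside this range. The other subtlety is ensuring the two comparison polytopes $\diamondsuit_r$ and $[-1,1]^r$ are genuinely lattice polytopes sitting inside the same lattice as $F_G$ so that Theorem~\ref{thm:monotonicity} applies verbatim; since all three are full-dimensional lattice polytopes in $\R^{E\setminus T}\cong\R^r$ with the standard integer lattice, and the containments $\diamondsuit_r\subseteq F_G\subseteq[-1,1]^r$ hold as established (the second after the dimension correction), Monotonicity applies directly and the proof concludes by assembling the two chains of inequalities.
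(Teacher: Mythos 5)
Your proposal follows the paper's proof essentially verbatim: both arguments apply Stanley's Monotonicity Theorem~\ref{thm:monotonicity} to the containments $\diamondsuit_r \subseteq F_G \subseteq [-1,1]^r$ from Lemma~\ref{alcoved-flow}, obtain the lower bound from $h_i^*(\diamondsuit_r)=\binom{r}{i}$ (which you compute directly where the paper cites it) and the upper bound from equation~(\ref{eqn:macmahon}), and realize tightness of the upper bound with the one-vertex bouquet of $r$ loops. Your bookkeeping corrections are also right and worth keeping: the index range $0\le i\le n-1$ in the statement and the cube $[-1,1]^{n-1}$ in Lemma~\ref{alcoved-flow} are typos for $0\le i\le r$ and $[-1,1]^r$, since $|E\setminus T|=r$ when $G$ is connected and $\deg L_{F_G}=r$.
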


\begin{proof}

By Lemma~\ref{alcoved-flow}, the $r$-dimensional cross-polytope, $\diamondsuit_{r}$, is a subpolytope of $\bar F_{G}$. 
So by Stanley's monotonicity theorem we have $h^*(L_{\diamondsuit_{r}}) \le h^*(L_{\bar F_{G}})$.
The lower bound in the theorem stems from the well-known fact that $h_i^{\star}(L_{\diamondsuit_{r}}) = {r \choose i}$, see \cite[Theorem 2.7]{BeckRobins07}.
Since $h_i^{\star}(L_{[-1,1]^{r}}) = h_i^*((2k+1)^r) = B(r+1,i+1)$ by (\ref{eqn:macmahon}), the upper bound follows from Lemma~\ref{alcoved-flow} as well. 
\end{proof}

The upper bound in the above theorem is tight since, for any $r\in\Z_{\>0}$,  
the  flow polytope of the graph consisting of  a single vertex and $r$ loops is $[-1,1]^{r}$. 
The lower bound, on the other hand, is not tight.
For example, no $3$-dimensional flow polytope is lattice isomorphic to $\diamondsuit_{3}$.

\bibliographystyle{alpha}
\bibliography{article}
\label{sec:biblio}

\end{document}